\newtheorem{thm}{Theorem}[section]
\newtheorem{prop}{Proposition}[section]
\newtheorem{rmq}{Remark}[section]
\newcommand{\R}{\mathbb{R}}
\numberwithin{equation}{section}
\providecommand{\R}{\mathbb{R}}
\newcommand{\N}{\mathbb{N}}
\newcommand{\Z}{\mathbb{Z}}
\newcommand{\stkout}[1]{\ifmmode\text{\sout{\ensuremath{#1}}}\else\sout{#1}\fi}
\newcounter{exercice}
\begin{document}
\title[Traveling wave in the Helmholtz regime]{Existence of traveling waves for a fourth order Schr\" odinger equation with mixed dispersion in the Helmholtz regime}

\author[ Casteras, and F\" oldes]{ Jean-Baptiste Casteras \and Juraj F\" oldes}

\address{ Jean-Baptiste Casteras
\newline \indent  CMAFCIO, Faculdade de Ci\^ encias da Universidade de Lisboa,
\newline  \indent Edificio C6, Piso 1, Campo Grande 1749-016 Lisboa, Portugal.}
\email{jeanbaptiste.casteras@gmail.com}

\address{Juraj F\" oldes
\newline \indent Department of Mathematics, University of Virginia \indent 
\newline \indent  322 Kerchof Hall, 141 Cabell Drive,  Charlottesville, VA 22904-4137,\indent }
\email{foldes@virginia.edu}

\thanks{J. B. Casteras is supported by FCT - Funda\c c\~ao para a Ci\^encia e a Tecnologia, under the project: UIDB/04561/2020;
J. F\" oldes is partly supported by the National Science Foundation under the grant NSF-DMS-1816408.}

\begin{abstract}
In this paper, we study the existence of traveling waves for a fourth order Schr\" odinger equations with mixed dispersion, that is, solutions to
$$\Delta^2 u +\beta \Delta u +i V \nabla u +\alpha u =|u|^{p-2} u,\ in\ \R^N ,\ N\geq 2.$$
We consider this equation in the Helmholtz regime, when the Fourier symbol $P$ of our operator is strictly negative at some point. Under suitable assumptions, 
we prove the existence of solution using the dual method of Evequoz and Weth provided that $p\in (p_1 , 2N/(N-4)_+)$. The real number $p_1$ depends on the number of principal curvature of $M$ 
staying bounded away from $0$, where $M$ is the hypersurface defined by the roots of $P$. We also obtained estimates on the Green function of our operator and a $L^p - L^q$ resolvent estimate 
which can be of independent interest and can be applied to other operators. 
\end{abstract}

\maketitle

\section{Introduction}
In this paper, we  construct non-trivial solution to a fourth order nonlinear equation
\begin{equation}
\label{maineq}
\Delta^2 u +\beta \Delta u +i V \nabla u +\alpha u =|u|^{p-2} u,\qquad \textrm{in}\quad \R^N ,\ N\geq 2,
\end{equation}
where $\alpha ,\beta \in \R$, $V\in \R^N$ and $p>2$. The equation \eqref{maineq} characterizes the profile of traveling wave solutions  $\varphi (t,x)=e^{i\alpha t}u(x-vt)$ of the fourth order nonlinear Schr\" odinger equation with mixed dispersions
\begin{equation}
\label{4NLSdis}
i\partial_t \varphi - \Delta^2 \varphi -\beta \Delta \varphi + |\varphi|^{p-2} \varphi =0,\ \varphi (0,x)=\varphi_0 (x),\ (t,x)\in \R\times \R^N.
\end{equation}
The fourth order term in equation \eqref{4NLSdis} has been introduced by Karpman and Shagalov \cite{MR1779828} and it allows to regularize and stabilize solutions to the classical Schr\" odinger equation as observed through numerical simulations by Fibich, Ilan, and Papanicolaou \cite{MR1898529}. Let us briefly mention selected results for \eqref{4NLSdis}. 
Well-posedness has been established by Pausader \cite{MR2353631} (using the dispersive estimates of \cite{MR1745182}) as well as some scattering results (we also refer to \cite{miao}). Recently, Boulenger and Lenzmann obtained (in)finite time blowing-up results in \cite{BL}.

 In the last few years, solitary waves solutions to \eqref{4NLSdis}, that is, solutions of the form $\varphi (t,x)=e^{i\alpha t}u(x)$, have been quite intensively studied. The profile of such waves 
 satisfies \eqref{maineq} with $V \equiv 0$ and we refer to \cite{MR3855391,MR3976588,MR4001029,MR3494890,FJMM} for several results concerning the existence of ground states and normalized solutions as well as some of their qualitative properties. Notice that unlike the classical second order Schr\" odinger equation, it does not seem possible to  transform standing  to traveling waves. 
Thus,  \eqref{maineq} with $V\neq 0$ should be studied separately. 
 Several results have been obtained in this direction by the first author in \cite{C}. Specifically, the existence of normalized solutions, that is, solutions with fixed mass and the existence of solutions with fixed mass and momentum was established under suitable assumptions. In such ansatz, the parameter $\alpha$ (or the parameters $\alpha$ and $V$) is not fixed and appears as a Lagrange multiplier. The existence of ground state solutions as well as some of their qualitative properties were also investigated. We call $u\in H^2 (\R^N))$ a ground state solution if $u$ minimizes the energy 
$$E(f)= \dfrac{1}{2} \int_{\R^N} (|\Delta f|^2 - \beta |\nabla f|^2 +i  \bar{f} V\nabla f+ \alpha |f|^2 )dx - \dfrac{1}{p}\int_{\R^N} |f|^p dx,$$   
among all function $f\in H^2 (\R^N)\backslash \{0\}$. We also refer to \cite{Him} (see also \cite{BLSS}) where similar questions were studied  for a larger class of operators of the form $P_v(D)=P(D)+iV \nabla$,
where $P(D)$ is a self-adjoint and constant coefficient pseudo-differential operator defined by $(\widehat{P(D) u})(\xi)=p(\xi) \hat{u} (\xi)$, $\hat{u}$ being the Fourier transform $u$. 
The existence of ground state solutions was obtained provided that $2<p < 2N/(N-2s)_+$  and $P(D)$ has a real-valued and continuous symbol $p:\R^N \rightarrow \R$ satisfying
\begin{equation}
\label{condPD}
A |\xi|^{2s} +c \leq p(\xi) \leq B |\xi|^{2s} \qquad \textrm{ for all } \xi \in \R^N ,
\end{equation}
for some constants $s>1/2$, $A>0$, $B>0$ and $c\in \R$. 
Condition \eqref{condPD} in particular implies that the energy $E$ is bounded from below and coercive.

  The main goal of the present manuscript is to obtain solutions to \eqref{maineq} when \eqref{condPD} is not satisfied. The main difficulty
stems from the fact that $0$ is contained in the essential spectrum of the operator $L= \Delta^2 +\beta \Delta +iV\nabla +\alpha$. To solve this obstacle, we use the dual variational method due to Evequoz and Weth \cite{EW}. 
 More precisely, we look for solution to
\begin{equation}
\label{eqdualintro}
\mathfrak{R}(v)= |v|^{p^\prime -2}v,\ in\ \R^N ,
\end{equation}
where $v=|u|^{p-2}u$, $p^\prime= p/(p-1)$ and $\mathfrak{R}=(\Delta^2 +\beta \Delta +iv\nabla +\alpha)^{-1}$ is a resolvent-type operator constructed by a limiting absorption principle. 
The advantage of \eqref{eqdualintro} is that it has
 variational structure in $L^{p^\prime}(\R^N)$ and one can use  a mountain pass theorem to construct a non-trivial solution to \eqref{maineq}. Our main result is the following:
 
\begin{thm}
\label{mainthm}
Let $P$ be the Fourier symbol of the differential operator in \eqref{maineq} without the drift term, that is, $P(x)=x^4 -\beta x^2 +\alpha$ and let
$$M:=\{\xi \in \R^N : P(|\xi|)-\xi V=0 \}.$$
Assume that :
\begin{itemize}
\item (A1) there exists $\tilde{\xi}\in \R^N$ such that $ P(|\tilde{\xi}|)-\tilde{\xi} V <0$;
\item (A2) $P^\prime (|\xi|)\frac{\xi}{|\xi|} -V \neq 0$, for all $\xi \in M$. 
\item (A3) If $P^\prime (|\xi|)=0$, then $P(|\xi|) - |V||\xi| \neq 0$.
\end{itemize}
Let $p_1<p<2N/(N-4)_+ $ with
\begin{equation}
p_1 = 
\begin{cases}
\frac{2(N+1)}{N-1} & \textrm{if }8 \beta |V|^{2} \leq  (\beta^2 -4\alpha)^2 \textrm{ and }\beta^2 - 4\alpha \neq 3 |V|^{4/3} \,,\\
\frac{2N}{N-2} & \textrm{if } 8 \beta |V|^{2} >  (\beta^2 -4\alpha)^2\textrm{ or } \beta^2 - 4\alpha = 3 |V|^{4/3} \,.
\end{cases}
\end{equation}
Then, there exists a nontrivial solution $u\in W^{4,q}(\R^N) \cap C^{3,\alpha}(\R^N)\cap L^p (\R^N)$, for all $q\in [p,\infty)$, $\alpha \in (0,1)$ to
$$\Delta^2 u +\beta \Delta u +i V \nabla u +\alpha u =|u|^{p-2} u,\ in\ \R^N . $$
\end{thm}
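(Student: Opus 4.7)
The plan is to carry out the dual variational strategy of Evequoz--Weth sketched in the introduction. Setting $v=|u|^{p-2}u$, so that $u=|v|^{p'-2}v$, we seek fixed points of $\mathfrak{R}(v)=|v|^{p'-2}v$, realized as critical points in $L^{p'}(\R^N)$ of the functional
\begin{equation*}
J(v) \;=\; \frac{1}{p'}\int_{\R^N}|v|^{p'}\,dx \;-\; \frac{1}{2}\Real\!\int_{\R^N}\mathfrak{R}(v)\,\overline{v}\,dx.
\end{equation*}
Producing a nontrivial solution therefore splits into two largely independent tasks: constructing $\mathfrak{R}$ and establishing a bounded extension $L^{p'}\to L^p$, and verifying mountain pass geometry together with a substitute for the Palais--Smale condition.

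For the first task, the Fourier symbol of the operator is $P(|\xi|)-\xi\cdot V$, which vanishes on $M$. Assumption (A1) guarantees $M\neq\emptyset$, while (A2) states that $\nabla_{\xi}\bigl(P(|\xi|)-\xi\cdot V\bigr)=P'(|\xi|)\xi/|\xi|-V$ is nonzero on $M$, so $M$ is a smooth compact hypersurface. I would define
\begin{equation*}
\mathfrak{R}_{\pm}(v) \;=\; \lim_{\eps\downarrow 0}\mathcal{F}^{-1}\!\left(\frac{\hat v(\xi)}{P(|\xi|)-\xi\cdot V\mp i\eps}\right)
\end{equation*}
by a limiting absorption principle and set $\mathfrak{R}=\Real\mathfrak{R}_+$. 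Existence of these limits and pointwise bounds on the associated Green kernel come from a stationary phase analysis of the oscillatory integral dual to surface measure on $M$; the decay rate at infinity is controlled by the number of principal curvatures of $M$ that stay bounded away from zero. Condition (A3) rules out the degeneration where a point of $M$ additionally satisfies $P'(|\xi|)=0$, i.e., where the radial part of the phase flattens; a direct computation with $P(x)=x^4-\beta x^2+\alpha$ then translates (A3) into the algebraic dichotomy visible in the definition of $p_1$.

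The main obstacle is the $L^{p'}\to L^p$ bound for $\mathfrak{R}$. Away from $M$ the operator is elliptic of order four and is bounded by classical Bessel potential estimates. Near $M$ the singular part behaves morally like a Fourier restriction/extension operator, and a Stein--Tomas / Kenig--Ruiz--Sogge argument yields the sharp resolvent bound in the range $p>2(N+1)/(N-1)$ whenever all $N-1$ principal curvatures of $M$ remain bounded away from zero; this is the first branch of $p_1$. In the degenerate algebraic cases $8\beta|V|^2>(\beta^2-4\alpha)^2$ or $\beta^2-4\alpha=3|V|^{4/3}$, the Gauss map of $M$ drops rank along a sub-hypersurface and one is forced back to the Sobolev exponent $2N/(N-2)$. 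Identifying the precise algebraic thresholds from the quartic parametrization of $M$ is the most delicate computation and the step I expect to carry the technical bulk of the argument.

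With the resolvent estimate at hand, the remaining variational analysis follows the Evequoz--Weth template. Because $p'<2<p$, the $L^{p'}$ term dominates for small $v$, giving a mountain pass ring $J|_{\|v\|_{p'}=r}\geq\delta>0$, while (A1) makes it possible to choose $v$ with Fourier support concentrated where the symbol is negative so that $J$ takes values below $J(0)=0$ far from the origin. The Palais--Smale condition fails because of translation invariance, but a non-vanishing lemma in $L^{p'}$ combined with the compactness of $\mathfrak{R}$ on bounded, compactly supported sequences produces a nontrivial critical point $v\in L^{p'}(\R^N)$. Bootstrapping through the mapping properties of $\mathfrak{R}$ then yields $u=\mathfrak{R}(v)\in W^{4,q}(\R^N)\cap L^p(\R^N)$ for every $q\geq p$, and standard elliptic regularity upgrades this to the stated $C^{3,\alpha}$ conclusion.
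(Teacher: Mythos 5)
Your overall strategy is the paper's: dual variational formulation in $L^{p'}(\R^N)$, a resolvent defined by limiting absorption whose kernel decays like $(1+|x|)^{-k/2}$ with $k$ the number of principal curvatures of $M$ bounded away from zero, a Stein--Tomas ingredient on the level sets of the symbol, and a mountain pass plus non-vanishing argument \`a la Evequoz--Weth, finished by elliptic bootstrap. The genuine gap is the step you compress into ``a Stein--Tomas / Kenig--Ruiz--Sogge argument yields the sharp resolvent bound'': since $M$ is a non-spherical surface of revolution, no off-the-shelf KRS-type estimate applies (those are tied to the spherical characteristic variety of the Laplacian). The technical heart of the paper is Theorem \ref{thminvopbound}, the dyadic bound $\|G_2^j * f\|_{L^2}\leq C2^{j/2}\|f\|_{L^{2(k+2)/(k+4)}}$ for the near-$M$ part of the Green function cut off to the annulus $|x|\sim 2^j$. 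After applying Bak--Seeger/Stein--Tomas on each level set $M_\tau$, one still has to prove $\int \max_{\xi\in M_\tau}|\hat G_2^j(\xi)|^2\,d\tau\leq C2^j$ (cf. \eqref{iig}); Plancherel or Bernstein-type reasoning only gives $C2^{jN}$, precisely because the radial dyadic annuli do not share the symmetry of $M$ (in Gutierrez's Ginzburg--Landau setting $M$ is a sphere and the issue disappears). The paper obtains the correct power by a cancellation argument: foliate a neighborhood of $M$ by the sets $M_\tau$, transport to $M_0$ by the normal map $\phi_\tau$, use the oddness of $\tau\mapsto 1/\tau$ to pair the $\pm\tau$ contributions, and control the four resulting difference terms via the smoothness of $\hat\psi$, of $|\nabla F|^{-1}$, of the Jacobian, and via $\nabla\hat\eta_1$. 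Without this estimate (or an equivalent substitute), the summation over dyadic pieces, the interpolation/duality scheme giving the $L^{p'}\to L^p$ bound, and hence the stated range $p>p_1$, are not justified; this, rather than the algebraic identification of the thresholds, is where the bulk of the work lies.

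Two smaller points. In Proposition \ref{propM} the dichotomy defining $p_1$ does not come from (A3): it comes from deciding when the meridian curvature (the $h_-''$ branch) vanishes, which is exactly the condition $8\beta|V|^2\leq(\beta^2-4\alpha)^2$, $\beta^2-4\alpha\neq 3|V|^{4/3}$; assumption (A3) only guarantees non-degenerate curvature at the points where $M$ meets the axis of rotation, which must be handled separately. And in the mountain pass geometry your sign is reversed: since $p'<2$, to get $J(tv)\to-\infty$ you need the quadratic form $\int \hat G\,|\hat v|^2\,d\xi$ to be \emph{positive}, so $\hat v$ should concentrate where the symbol $P(|\xi|)-\xi\cdot V$ is small and positive (just off $M$), not where it is negative; supporting $\hat v$ in the region where the symbol is negative makes $J(tv)$ nonnegative for all $t$. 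Both points are fixable, but the dyadic resolvent estimate is the substantive missing piece of your argument.
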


We refer to Remark \ref{rmq} for necessary and sufficient conditions on $\alpha,\beta$ and $V$ for (A1), (A2), and (A3) to be satisfied.

Let us make a few comments on this theorem. Assumption (A1)  guarantees that we are in the Helmholtz case, that is, \eqref{condPD} is not satisfied, thus if removed, our main result is 
already covered in the literature. 
Assumption (A2) implies that $M$ is a smooth manifold, and it is a mandatory condition for our method to work. 
Otherwise,  even the definition of the resolvent type operator would be problematic if $P(x)$ had a double root. We remark that if (A2) is not satisfied, then either $M$ is a single point, or
a union of two smooth touching manifolds. 

The assumption (A3) allows us to control the number of non-vanishing curvatures of $M$ located on the axis of rotation (see the case $h(t)=0$ in the proof of Proposition \ref{propM} below). In particular, if (A3) is not satisfied, then all principal curvatures
vanish at the axis of rotation, which leads to insufficient estimates for our purposes. 

Next, the value of $p_1$ is  linked to the number $k$ of principal curvatures of $M$ staying bounded from $0$. An interesting feature of our problem is that, depending on the coefficients of $P$, 
$k$ is either $N-1$ (similar to the "classical" situation where $M$ is a $(N-1)$-sphere) or $N-2$. Let us remark that our method can be applied to more general self-adjoint, constant coefficient operators. 
More precisely, assume that the Fourier symbol of the operator is given by $\tilde{P}$ and $\tilde{M}= \{x\in \R^N : \tilde{P}=0\}$ is a not empty regular compact hypersurface with $k$ principal curvature bounded away from $0$. 
Then,  \eqref{maineq} with the changed operator (corresponding to $\tilde{P}$) admits a solution provided $2(k+2)/k<p<2N/(N-s)_+$, where $s>0$ is the order of the operator. 
We remark that we expect our solution to decay as $|x|^{-k/2}$ at infinity, in analogy with the situation for the nonlinear Helmholtz equation (see \cite{MMP}).  

 Let us make some comments on the proof of Theorem \ref{mainthm}. The main challenge is to construct the resolvent type operator $\mathfrak R$ and to analyze its mapping properties. To do so, we follow the approach of Gutierrez \cite{Gu}, which relies on two main ingredients. The first one is the decay of the Green function of the operator $\Delta^2 +\beta \Delta +iV\nabla +\alpha$, which  is connected with 
 $k$, the number of principal curvature staying bounded away from 0. This can be already seen from the classical Fourier restriction result of Littman \cite{MR155146} (see Theorem \ref{restthm}). To prove 
the decay,  we follow  the framework of  Mandel \cite{MR3949725}, that was developed for a periodic differential equation of Helmholtz type. 

The second main ingredient, in the analysis of the resolvent operator,  is the Stein-Tomas Theorem (see Theorem \ref{STthm}  below proved in \cite{MR2831841})
 The main technical problem is  the proof $L^{2(k+2)/(k+4)} (\R^N) \rightarrow L^2 (\R^N)$ estimate (see \eqref{RT2} below) for which  we used a radial dyadic decomposition. 
 After applying the Stein-Tomas Theorem, we need an estimate
\begin{equation}\label{iig}
\int_{-3c_1/4}^{3c_1 /4}  \max_{\xi \in M_\tau}(|\hat{G}_2^j (\xi )|^2) d\tau \leq C2^j\,,
\end{equation} 
where $G_2^j$ is roughly the Green function 
 cut-off on to the annulus $B_{2^{j+1}} \backslash B_{2^{j-1}}$. This approach was employed in
  \cite{Gu} and \cite{MR3977892}, where the dyadic decomposition has the same symmetry as $M=\mathbb{S}^{N-1}$, which led to the left hand side of \eqref{iig} without $\max$.  
  Then, an application of Plancherel Theorem led to the desired result. However, our situation is more complicated and the Plancherel theorem, or Bernstein type inequality yields only insufficient bound by 
  $C2^{Nj}$. 
   To prove \eqref{iig}, we work directly in the Fourier space and we use a non-trivial cancellation properties that originate from the non-degeneracy assumption (A2). 
   We stress, that our estimate is the same as the radial case, although working in more general setting. 
   We also remark that the mapping properties of our resolvent operator can be made independent of the dimension $N$ (with slight loss of generality)
   and as such they depend only on $k$

The plan of this paper is the following: in Section $2$, we study the properties of $M$. In particular, we give conditions on the coefficients of our equation implying that the number of principal curvatures of $M$ bounded away from $0$ is either $N-1$ or $N-2$. Then, we use these properties to study the decay of the Green function of our operator. In Section $3$, we study the mapping property of the resolvent type operator $(\Delta^2 +\beta \Delta +iv\nabla +\alpha)^{-1}$. Finally, we use them to construct non-trivial solutions to \eqref{maineq} using the dual variational method.  

\noindent {\em Acknowledgement} : We would like to thank Rainer Mandel and Gennady Uraltsev for very useful and inspiring discussions.

\section{Decay estimate of the Green function}
The aim of this section is to analyse the decay of the Green function $G$ of the operator $\Delta^2 +\beta \Delta +iV\nabla +\alpha$, that is,
$$\Delta^2 G +\beta \Delta G +i V  \nabla G +\alpha G= \delta_0,\ \textrm{in }\ \R^N \,,$$
where $\delta_0$ is the Dirac measure located at the origin. 
Observe that formally, applying the Fourier transform to the previous equation, we obtain
$$G(x)=  \int_{\R^N} \dfrac{1}{P(|\xi |) - \xi V } e^{ix\xi} d\xi, $$
where $P(x)=x^4 -\beta x^2 +\alpha$. 
This integral is not well defined for general $P$; 
however, due to our assumptions on $P(|\xi |) - \xi v $, we show non-trivial cancellation properties, which allow us to prove that $G$ is well defined and decays at infinity. 
As usual for this kind of problem, we apply the "limiting absorption principle". Specifically, we define $G$ as a limit of approximating Green functions 
$G_\varepsilon$, $\varepsilon \neq 0$ associated with the operator  $\Delta^2 +\beta \Delta +i V \nabla +(\alpha -i \varepsilon)$, that is, 
$$G_\varepsilon (x)= \int_{\R^N} \dfrac{1}{P(|\xi |) - \xi V -i\varepsilon} e^{ix\xi} d\xi.$$
Notice that the denominator in the integral does not vanish for any $\varepsilon \neq 0$. 
It is well-known that the decay of this integral depends only on the set of points where $P(|\xi |)  -\xi V = 0$. More precisely, we prove that it depends on the number of non-zero principal curvatures $M$. 
Our first result gives optimal conditions on the coefficients of $P$ so that the principal curvatures of $M$ are strictly positive.  

\begin{prop}
\label{propM}
Let $P(x)=x^4 -\beta x^2 +\alpha$ and
$$M:=\{\xi \in \R^N : P(|\xi|)-\xi V = 0 \}.$$
If (A1), (A2), and (A3) hold, then the following assertions are true. 
\begin{itemize}
\item If 
\begin{equation}
\label{assumption1}
8 \beta |V|^{2} \leq  (\beta^2 -4\alpha)^2\ and\ \beta^2 - 4\alpha \neq 3 |V|^{4/3},
\end{equation}
 then $M$ is a regular, compact hypersurface with all principal curvatures bounded away from $0$. 
\item If 
\begin{equation}
\label{assumption2}
8 \beta |V|^{2} >  (\beta^2 -4\alpha)^2\ or \ \beta^2 - 4\alpha = 3 |V|^{4/3},
\end{equation}
then  exactly one of the principal curvature of $M$ vanishes on a $N-2$-dimensional set.
\end{itemize}
\end{prop}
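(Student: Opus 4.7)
The plan is to exploit the rotational symmetry of $M$ about the axis spanned by $V$ to decompose the $N-1$ principal curvatures into $N-2$ equal ``latitudinal'' curvatures (from the rotational orbit directions) and one ``meridional'' curvature, and then to analyze each in terms of the polynomial $P$.

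First, I establish that $M$ is a regular compact hypersurface. Since $P$ has degree four, $F(\xi):=P(|\xi|)-V\cdot\xi\to+\infty$ as $|\xi|\to\infty$; combined with (A1), $M=F^{-1}(0)$ is nonempty, bounded, and closed, hence compact. Assumption (A2) states precisely that $\nabla F(\xi)=P'(|\xi|)\xi/|\xi|-V$ does not vanish on $M$, so the implicit function theorem gives that $M$ is a smooth $(N-1)$-dimensional hypersurface. Without loss of generality take $V=|V|e_N$. For $\xi\in M$ off the $V$-axis, the tangent space decomposes as $T_\xi M=W_\xi\oplus\R\tau$, where $W_\xi=\{w\in\R^N:w\perp\xi,\,w\perp V\}$ is the $(N-2)$-dimensional space tangent to the rotational orbit through $\xi$ and $\tau$ is the meridian direction. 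Writing the Hessian as $D^2F(\xi)=\mu\,\xi\xi^T/|\xi|^2+\lambda\,I$ with $\mu=P''(|\xi|)-P'(|\xi|)/|\xi|$ and $\lambda=P'(|\xi|)/|\xi|$, one sees that $W_\xi$ is an eigenspace with eigenvalue $\lambda$. Thus the $N-2$ latitudinal principal curvatures all coincide with $\kappa_\ell(\xi)=P'(|\xi|)/(|\xi|\,|\nabla F(\xi)|)$ (up to sign). The meridional curvature $\kappa_m$ is the curvature of the generating plane curve $\gamma=\{(r,t):P(\sqrt{r^2+t^2})=|V|t\}$. Applying the implicit-curve formula, together with the identity $\rho P''(\rho)-P'(\rho)=8\rho^3$ specific to $P(\rho)=\rho^4-\beta\rho^2+\alpha$, the condition $\kappa_m(\xi)=0$ (for $\rho=|\xi|\neq 0$) reduces to
$$4r^2|V|^2+(2\rho^2-\beta)|\nabla F|^2=0,\qquad r=|\xi'|.$$

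I next analyze the vanishing sets. We have $\kappa_\ell=0$ iff $\rho^2=\beta/2$ (forcing $\beta>0$), and intersecting with $M$ gives $V\cdot\xi=\alpha-\beta^2/4$; such a $\xi$ exists iff $(\beta^2-4\alpha)^2\leq 8\beta|V|^2$. Strict inequality produces a smooth $(N-2)$-sphere lying in the hyperplane $V\cdot\xi=\alpha-\beta^2/4$, while equality collapses it to a single axis point, excluded by (A3) when that point lies on the positive $V$-semiaxis. Hence under \eqref{assumption1}, $\kappa_\ell$ is bounded away from $0$ on the compact set $M$, while under the first alternative of \eqref{assumption2}, it vanishes exactly on an $(N-2)$-sphere. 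For $\kappa_m$: substituting $|\nabla F|^2=(P'(\rho))^2+|V|^2-2P(\rho)P'(\rho)/\rho$ and $r^2|V|^2=\rho^2|V|^2-P(\rho)^2$ (both valid on $M$) into the reduced equation yields a polynomial relation in $\rho^2$ alone. A case analysis using (A3) at axis points, and the condition $\beta^2-4\alpha\neq 3|V|^{4/3}$ at non-axis points, shows that $\kappa_m$ does not vanish on $M$ under \eqref{assumption1}. Under \eqref{assumption2}, evaluating $\kappa_m$ at the vanishing set of $\kappa_\ell$ gives a nonzero value (again using $\beta^2-4\alpha\neq 3|V|^{4/3}$), so exactly one principal curvature vanishes there, on an $(N-2)$-dimensional set.

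The main obstacle is the meridional analysis. After eliminating $r$ and $t$ from $\kappa_m=0$ using $F=0$, one obtains a polynomial identity in $\rho^2$ alone whose solutions must be catalogued. The threshold $\beta^2-4\alpha=3|V|^{4/3}$ emerges as the precise resonance condition under which $\kappa_m$ can vanish simultaneously with $\kappa_\ell$ away from the axis, while the tangential borderline $(\beta^2-4\alpha)^2=8\beta|V|^2$ requires separate careful sign tracking combined with (A3) on the $V$-axis.
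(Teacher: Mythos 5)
Your route is genuinely different from the paper's: instead of the graph parametrization $h_\pm(\xi_1)=|\tilde\xi|$ and the study of $h_\pm''$, you work with the shape operator of the level set $F^{-1}(0)$ and the implicit-curve formula for the meridian. The pieces you actually compute do check out: the orbit directions $W_\xi$ are eigenvectors of the shape operator with eigenvalue $P'(|\xi|)/(|\xi|\,|\nabla F(\xi)|)$ of multiplicity $N-2$, and the reduction of $\kappa_m=0$ to $4r^2|V|^2+(2\rho^2-\beta)|\nabla F|^2=0$ is correct (I verified it via $\rho P''(\rho)-P'(\rho)=8\rho^3$). The first genuine gap is that the decisive step is only asserted: after eliminating $r,t$ on $M$ you say that ``a case analysis \dots shows that $\kappa_m$ does not vanish on $M$ under \eqref{assumption1}.'' But producing and analyzing that one-variable relation is precisely where the thresholds $8\beta|V|^2\lessgtr(\beta^2-4\alpha)^2$ and $\beta^2-4\alpha=3|V|^{4/3}$ have to emerge; in the paper this is the substitution $y=\sqrt{k+4w}$ and the factorization leading to \eqref{star2}, together with the sign discussion and the admissibility check $f(w)>0$. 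Without carrying this out, the heart of the proof is missing.

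The second gap is that your treatment of \eqref{assumption2} does not prove the stated dichotomy and is internally inconsistent. First, under \eqref{assumption2} you may not invoke $\beta^2-4\alpha\neq 3|V|^{4/3}$: the second alternative of \eqref{assumption2} is exactly the equality case, where your set $\{|\xi|^2=\beta/2\}\cap M$ can be empty or reduce to an axis point, and the degeneracy (per the paper) is a meridional inflection at $y=1$, which you never locate. Second, you do not exclude that $\kappa_m$ vanishes elsewhere on $M$ under \eqref{assumption2} (the paper's intermediate-value argument shows it generically does when $8\beta|V|^2>(\beta^2-4\alpha)^2$), which is needed to conclude that the degeneracy set is as described. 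Third, and most seriously, your own formula says the vanishing curvature on the sphere $\{|\xi|^2=\beta/2,\ V\xi=\alpha-\beta^2/4\}$ is the \emph{latitudinal} eigenvalue, which has multiplicity $N-2$; so at those points $N-2$ principal curvatures vanish, and ``exactly one principal curvature vanishes'' is false for $N\geq 4$ by your own computation. Note that this puts you in direct conflict with the paper's proof, which attributes the vanishing to the meridional curvature $-h''/(1+h'^2)^{3/2}$ and treats the latitudinal curvatures $1/(h\sqrt{1+h'^2})$ as nonzero off the axis; the disagreement occurs exactly at the points where $P'(|\xi|)=0$ on $M$, i.e.\ where the graph parametrization degenerates ($h'\to\infty$, the junction of the $h_+$ and $h_-$ branches). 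You cannot simultaneously assert $\kappa_\ell=0$ there and conclude the proposition as stated; you must either show that such off-axis points do not occur under \eqref{assumption2} (they do occur, e.g.\ for $\beta=2$, $\alpha=1$, any $V\neq0$, at $\xi=(0,\tilde\xi)$ with $|\tilde\xi|=1$) or reconcile the resulting count of non-degenerate curvatures with the statement and with its later use, where $k=N-2$ curvatures bounded away from zero is required. As written, the argument does not establish the second bullet.
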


\begin{rmq}
\label{rmq}
In this remark, we find necessary and sufficient conditions on the coefficients of our operator to satisfy assumptions (A1), (A2), and (A3). Let us begin with (A2). Assume by contradiction that there exists $\xi \in M$ such that $P^\prime (|\xi|)\frac{\xi}{|\xi|} -V = 0$. Therefore $\xi$ is a multiple of $V$ so we set $\xi = \frac{V}{|V|}q$  and obtain 
that $q$ satisfies $P'(q) - |V| = 0$, or equivalently
\begin{equation}\label{deqq}
4 q^3 - 2\beta q - |V| =0.
\end{equation}
Since $\xi = \frac{V}{|V|}q \in M$, then $|\xi|^4 - \beta |\xi|^2 + \alpha - V \xi = 0$ and in addition to \eqref{deqq} we have
\begin{equation} \label{oefq}
q^4 - \beta q^2 + \alpha - |V| q = 0 \,.
\end{equation}
Using the Euclidean algorithm for finding the greatest common divisor of these two polynomials, we obtain that they have a common root if and only if 
\begin{equation}\label{tcn}
256 \alpha^3 - 128 \alpha^2 \beta^2 + 4\beta^3 |V|^2 - 27 |V|^4 + 16 \alpha (\beta^4 - 9 \beta |V|^2) = 0 \,.
\end{equation}
Although, technically complicated, it is easy to verify if a given set of parameters satisfies \eqref{tcn}. Also, notice that \eqref{tcn} is a quadratic equation for $|V|^2$.

To satisfy assumption (A1), we need $P(|\xi|) - V\xi < 0$ for some $\xi$, and therefore we minimize the left hand side by choosing $\xi = \frac{V}{|V|} |\xi|$
\begin{equation}
|\xi|^4 - \beta |\xi|^2 + \alpha < |V| |\xi| \,.
\end{equation}
So our problem is equivalent to
\begin{equation}
|V| > \min_{x > 0} \frac{x^4 - \beta x^2 + \alpha}{x} =  \min_{x > 0} g(x) \,.
\end{equation}
If $\alpha < 0$, then $g(x) \to -\infty$ as $x \to 0$, and (A1) is trivially satisfied for any $|V|$. If $\alpha = 0$, then $g(0) = 0$, and (A1) is satisfied for any $V \neq 0$. 

Finally, if $\alpha > 0$, then $g(x) \to \infty$ for $x \to \infty$ or $x \to 0$, then the global minimum is also a local minimum. 

Evaluating $g'(x) = 0$, we find that the mininum is attained at $x$ solving
\begin{equation}
\frac{3x^4 - \beta x^2 - \alpha}{x^2} = 0 \,,
\end{equation}
that is, at since $x \geq 0$ 
\begin{equation}\label{sxpm}
 x_{\pm} = \sqrt{\dfrac{\beta \pm \sqrt{\beta^2 +12 \alpha }}{6}} \,.\ 
\end{equation}
Again, since $\alpha > 0$, then the only admissible root is $x_+$. 
Thus for $\alpha > 0$ after using $3x^4 - \beta x^2 - \alpha = 0$, we find (A1) holds if and only if  
\begin{equation}
|V| >  \frac{x_+^4 - \beta x_+^2 + \alpha}{x_+} =  \frac{2}{3} \frac{-x_+^2 + 2\alpha}{x_+} .
\end{equation}

Finally, (A3) holds if and only if
$$\alpha \neq 0 \qquad \textrm{and } \quad -\frac{\beta^2}{4} + \alpha \pm |V|\frac{\beta}{2} \neq 0. $$

\end{rmq}

\begin{proof}[Proof of Proposition \ref{propM}.]
Since $P^\prime (|\xi|)\frac{\xi}{|\xi|} -V \neq 0$, for all $\xi \in M$, and $P$ has a superlinear growth, the implicit function theorem implies that $M = \{x: P(|x|) - Vx = 0\}$ 
is a smooth compact manifold. 
To simplify notation, we choose our coordinate axes such that $V=|V|e_1$  and parametrize $M$ as a surface of revolution by 
$\xi =(\xi_1 ,\tilde{\xi})\in \R \times \R^{N-1}$. Since $(\xi_1, \tilde{\xi}) \in M$ satisfy $P((\xi_1^2 + |\tilde{\xi}|^2)^{\frac{1}{2}}) - |V| \xi_1 = 0$, by solving for $|\tilde{\xi}|$, we obtain 
$h_\pm(\xi_1) = |\tilde{\xi}|$ with $h_\pm(z)=\sqrt{(P^{-1}_\pm (|V|z))^2 -z^2 }$ and 
\begin{equation}
(P^{-1}_\pm (z))^2 = \dfrac{\beta \pm \sqrt{\beta^2 -4(\alpha - z)} }{2}.
\end{equation}
Before we proceed, let us clarify some details. Note that we choose $h$ instead of $-h$, since $|\tilde{\xi}| \geq 0$. Also, in general $P^{-1}(z)$ has four solutions, but since $P$ is even, they come 
in pairs differing by the sign. However, we only need to look $(P^{-1})^2$. Therefore it suffices to consider the two positive roots. Since we are looking at local properties (curvatures) we can 
treat two connected components (corresponding to $h_+$ and $h_-$) separately unless they touch. This happens only when a root of $P(|\xi|) - V\xi$ is degenerate, since otherwise $M$ is a smooth manifold 
(see Figure \ref{fig1}). 

\begin{figure}
\centering 
\begin{subfigure}{.35\textwidth}
\centering
\includegraphics[width=5cm]{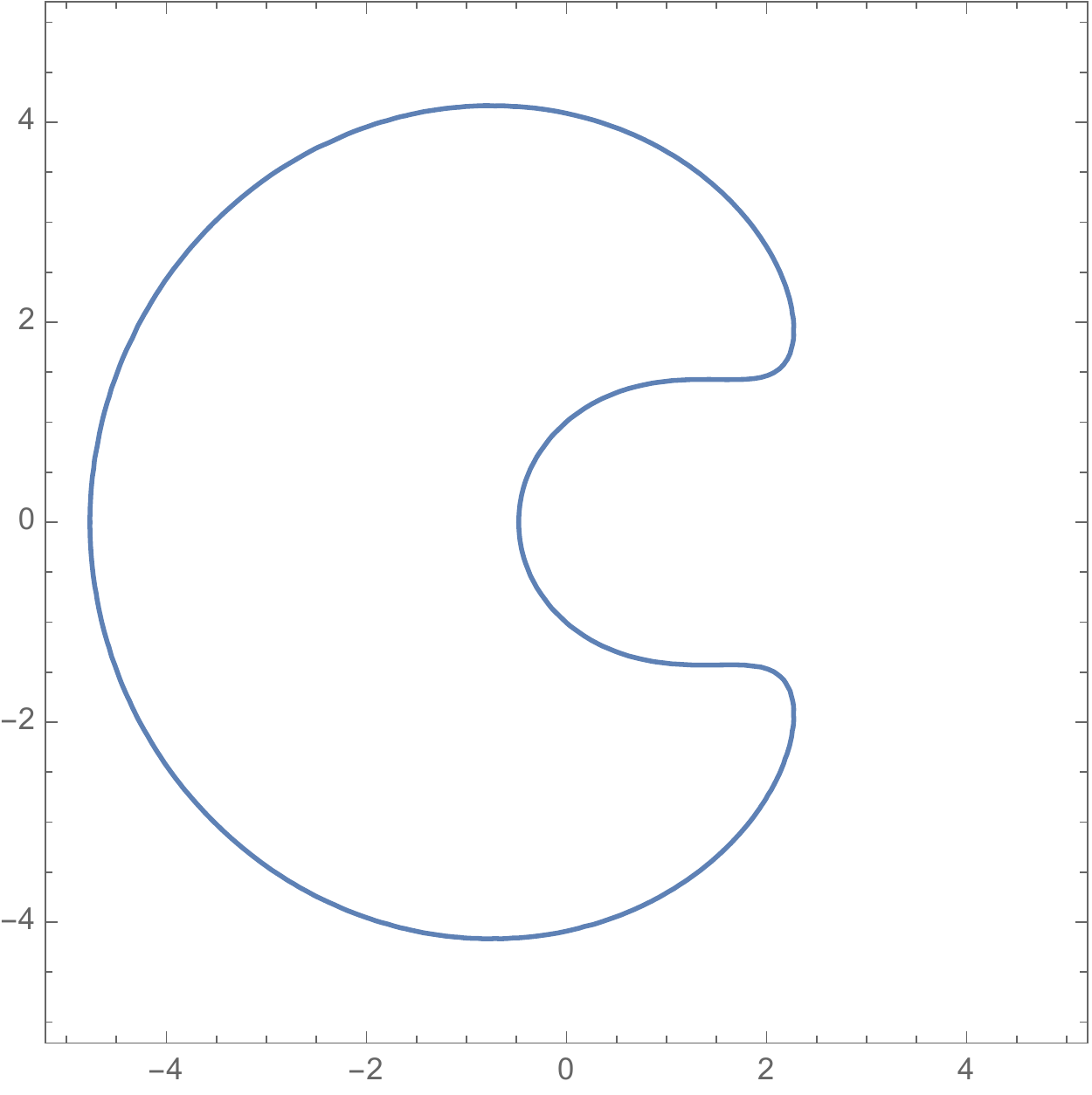}
\caption{Cross section of $M$ with one component}
\end{subfigure}
\qquad\qquad 
\begin{subfigure}{.35\textwidth}
\centering
\includegraphics[width=5cm]{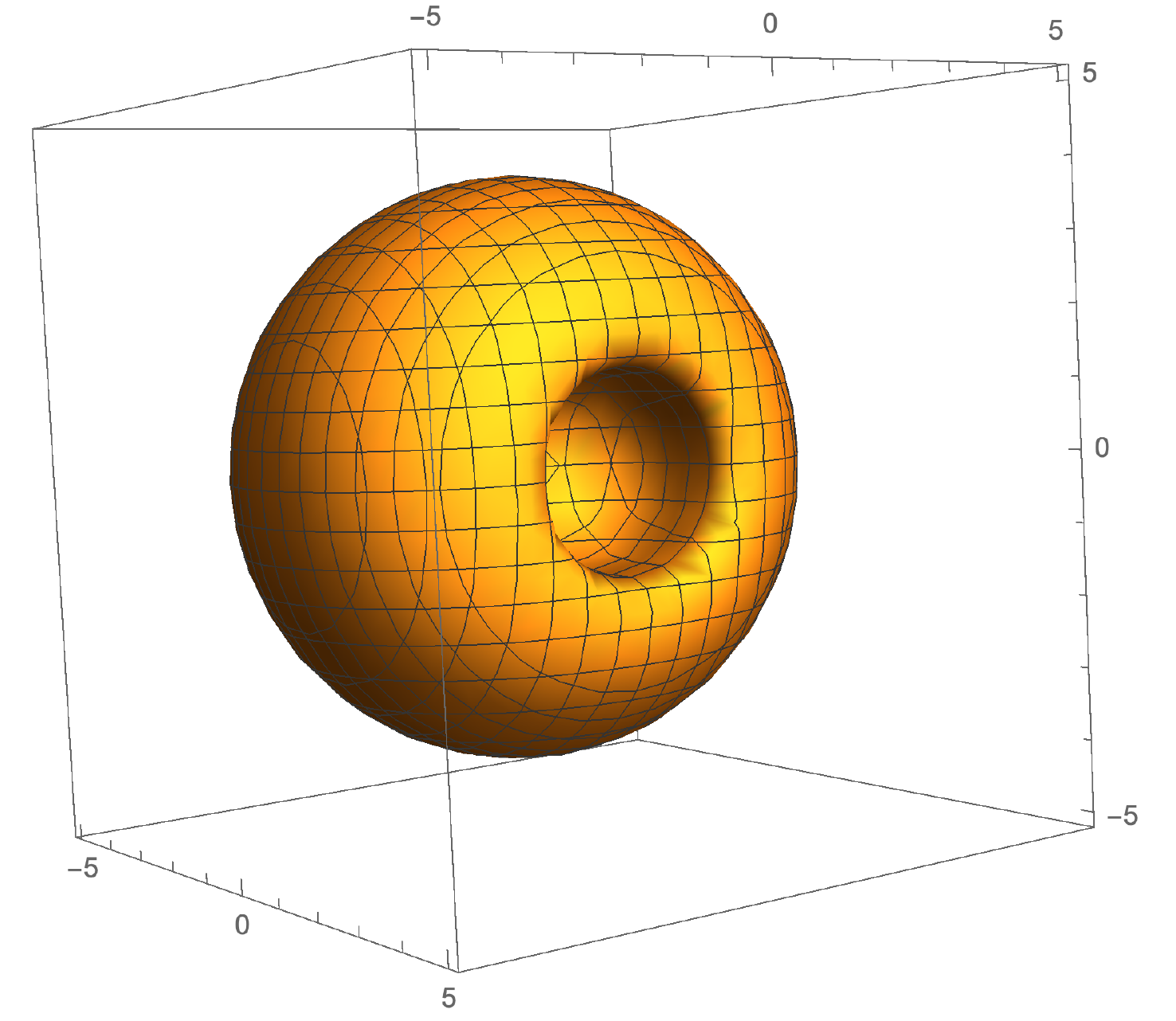}
\caption{Visualization of $M$ in 3D}
\end{subfigure}
\newline
\newline
\noindent
\centering
\begin{subfigure}{.35\textwidth}
\centering
\includegraphics[width=5cm]{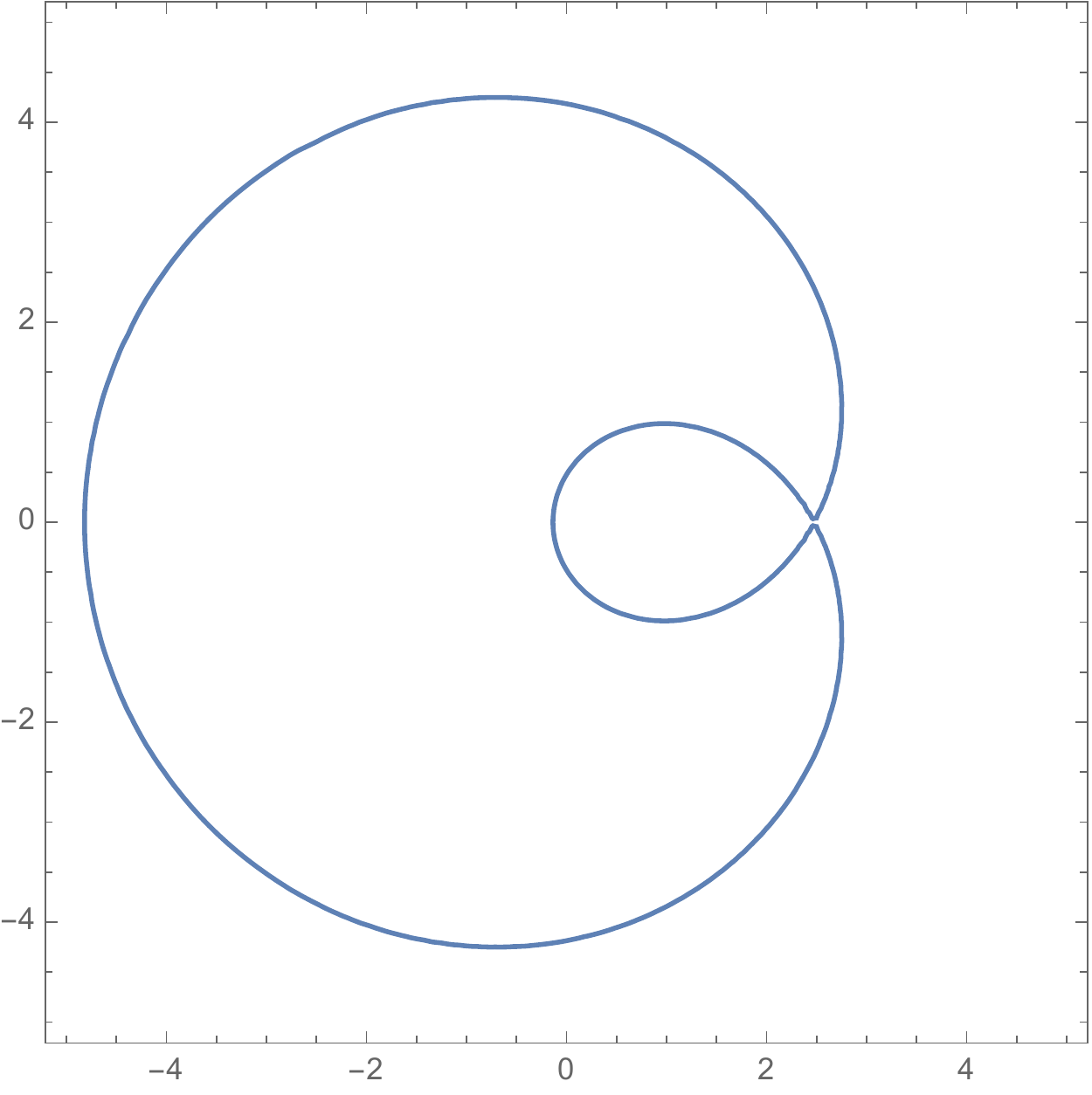}
\caption{Degenerate case}
\end{subfigure}
\qquad \qquad 
\begin{subfigure}{.35\textwidth}
\centering
\includegraphics[width=5cm]{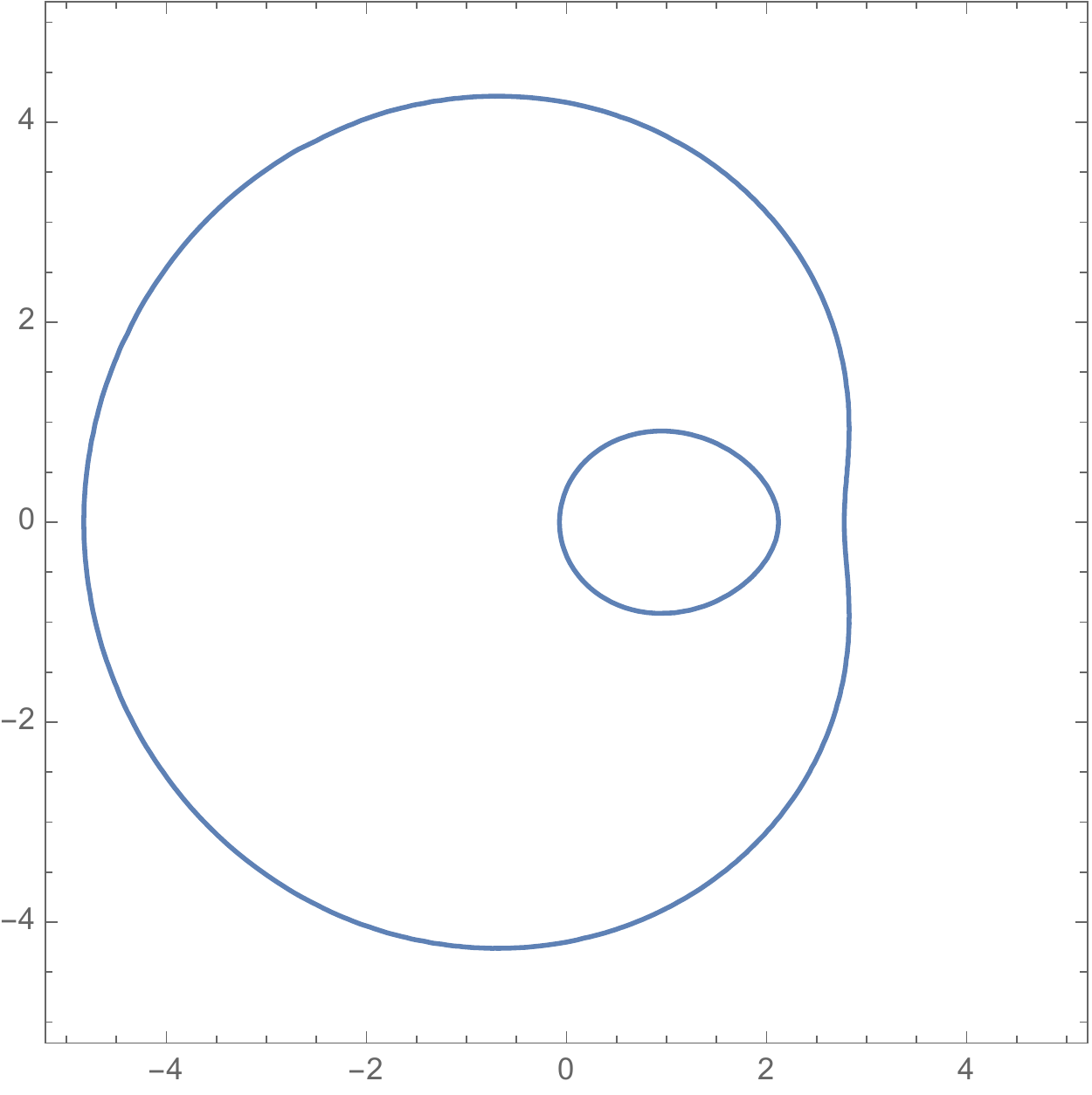}
\caption{$M$ with two components}
\end{subfigure}
\hphantom{1cm}
\caption{Figures (A), (C), (D) are cross sections of $M$ with different parameters. Figure (B) is surface of revolution
corresponding to (A)}
\label{fig1}
\end{figure}

Since $P(x) \approx x^4$ for $x$ large and $P$ is bounded from below, we obtain that $P^{-1} \approx x^{1/4}$ for $x$ large and $P^{-1}$ is defined only for $x \geq Q$ for some $Q \in \R$.
Since $P^{-1}_{\pm}(x) \geq |x|$ for any $x$ in the domain of $h_{\pm}$, we get that 
the domain of $h_\pm$ is of the form $[A_{\pm}, B_{\pm}]$ for some $A_{\pm}, B_{\pm} \in \R$. Note that one or both branches might not exist in the situations where $M$ is connected or empty. 

Thus, we can parametrize $M$ as
$$
M_\pm=\{(t,h_{\pm}(t) \omega ):\omega \in \mathbb{S}^{N-2},\ A_\pm\leq t\leq B_\pm \} \,,
$$
and then the principal curvatures $\kappa_i$, $i=1,\ldots ,N-1$, of $M$ are given (up to a sign) by 
\begin{equation}
\label{kappa}
\kappa_i (t)=\begin{cases}
\dfrac{1}{h(t) \sqrt{1+h^\prime (t)^2 }},\ &i=1,\ldots ,N-2, \\ 
\dfrac{-h^{\prime \prime} (t)}{(1+h^\prime (t)^2)^{3/2}},\ &i=N-1 \,,
\end{cases}
\end{equation}
where $h$ stands for $h_\pm$. 
Observe that $\kappa_i$, $i=1,\ldots ,N-2$, are non-zero whenever $h(t) \neq 0$. 

Before discussing $\kappa_{N - 1}$, let us treat the case where $t$ is such that $h(t) = 0$. 
By the rotational symmetry of $M$, we obtain that, at $(\xi_1, |\tilde{\xi}|) = (t, h(t)) = (t, 0)$, all principal curvatures are the same. Therefore
it suffices to prove that one of them is non-zero. Choose a smooth curve $\gamma(\xi_2) = (\xi_1(\xi_2), \xi_2, 0) \in M$, $\xi_2 \in (-\varepsilon, \varepsilon)$ with $\xi_1'(0) = 0$. Such a curve exists by the rotational symmetry. 
If $\xi_1(0) = 0$, then $\gamma(0) \in M$ yields $P(0) - 0|V| = 0$ and since $P$ is even $P'(0) = 0$, a contradiction to (A3). So, we assume $\xi_1(0) \neq 0$.  

To prove 
that $\gamma$ has nonzero curvature at $0$, and therefore $M$ has a nonzero curvature at $(\xi_1, 0)$, it suffices to prove that $\xi_1''(0) \neq 0$. However, since $\gamma \subset M$, we have 
\begin{equation}
P(|(\xi_1(\xi_2), \xi_2, 0)|) - |V| \xi_1(\xi_2) = 0 \,. 
\end{equation} 
Differentiating implicitly twice the previous equality and substituting $\xi_2 = 0$, $\xi_1'(0) = 0$, we obtain
\begin{equation}
-|V| \xi_1''(0) + P'(|\xi_1(0)|) \xi_1''(0)\frac{\xi_1(0)}{|\xi_1(0)|} +  P'(|\xi_1(0)|)\frac{1}{|\xi_1(0)|} = 0\,.
\end{equation}
Since $\xi_1(0) \neq 0$ and $V = |V|e_1$, we deduce from (A2) that 
\begin{equation}
|P'(|\xi_1(0)|)\frac{\xi_1(0)}{|\xi_1(0)|} -  |V|| = |\nabla(P(|\xi|) - V\xi)|\Big|_{\xi = (\xi_1 (0), 0)} \neq 0 \,.
\end{equation}
Finally, since by (A3), $P'(|\xi_1(0)|) = 0$ implies $P(|\xi_1(0)|) - |V|\xi_1(0) \neq 0$, that is,  $(\xi_1(0), 0) \not \in M$, we obtain that $\xi_1''(0) \neq 0$. Thus $M$ has $N - 1$ non-vanishing curvatures 
at  any point $(\xi_1, 0) \in M$.

In the rest of the proof we assume $h(t) \neq 0$. 
To treat $\kappa_{N-1}$, recall that 
$$h_\pm (z)= \sqrt{\dfrac{\beta \pm \sqrt{\beta^2 -4(\alpha - |V|z)} }{2} -z^2}.$$
First, $h_+$ is a square root of a sum of two concave functions, at least one of them strictly concave, and therefore $h''(z) < 0$ whenever defined. In particular we obtain $(\kappa_+)_{N - 1} \neq 0$.

To treat $h_-$, we set $z=|V|^{1/3}w$, $e=|V|^{-2/3} \beta$, and $k=(\beta^2 - 4\alpha)|V|^{-4/3}  $ and (noticing that $h_- \geq 0$ when it is defined)
\begin{align*}
\sqrt{f(w)} &:=|V|^{-1/3}h_- (|V|^{1/3} w)\\
&= \sqrt{\dfrac{e-\sqrt{k+4w} }{2} -w^2}.
\end{align*}
Recall $h_- (z) > 0$ and 
notice that $h_-^{\prime \prime} (z)=0$ if and only if 
\begin{equation*}
\label{nondeg}
(f^\prime (w))^2 =2 f^{\prime \prime}(w) f(w) \,,
\end{equation*}
or equivalently
\begin{equation}
\left(\frac{1}{\sqrt{k + 4w}} + 2w\right)^2 = 2 \left( \frac{2}{\sqrt{(k + 4w)^3}}  - 2\right) f(w) \,.
\end{equation}
Observe that $k + 4w = 0$ corresponds to the case $h(t) = 0$ treated separately above. 
Using the change of variables $y=(k+4w)^{1/2}$, the previous equality becomes
\begin{equation}
\label{star}
\left(\dfrac{1}{y}+ \dfrac{y^2 - k}{2}\right)^2 =4 (y^{-3} -1) f(w) = 4 (y^{-3} -1) \left(\dfrac{e-y}{2} - \left(\dfrac{y^2 -k}{4}\right)^2 \right) .
\end{equation}
Since the left hand side of \eqref{star} is non-negative and $f(w) > 0$, there are no solutions if $y > 1$. Next, for $g = \frac{e}{2} - \frac{k^2}{16}$, 
the factorization of \eqref{star}
gives
\begin{equation}
\frac{3}{y^2} + \frac{k^2}{4} - \frac{3}{4}y - \frac{3}{2}\frac{k}{y} = 4g\left(\frac{1}{y^3} - 1\right) \,.
\end{equation} 
A multiplication by $4y^2$ yields
\begin{equation}
12 + k^2y^2 - 3y^3 - 6ky = \frac{16}{y}g\left(1 - y^3\right) \,,
\end{equation}
and therefore
\begin{equation}\label{star2}
(ky -3)^2 +3(1-y^3) = \dfrac{16}{y}g (1-y^3). 
\end{equation}
Hence, if $g \leq 0$ or equivalently when $8 \beta V^{2} \leq  (\beta^2 -4\alpha)^2$, then \eqref{star2} has no solution $y < 0$ since the left hand side is positive and the right hand side is non-positive. 
Finally when $y=1$, \eqref{star2} has a solution only if $k=3$, that is, $\beta^2 - 4\alpha = 3 V^{4/3}$. Overall, we have proved that $h^{\prime \prime}\neq 0$ provided that
\begin{equation}
\label{mainCond}
8 \beta V^{2} \leq  (\beta^2 -4\alpha)^2\qquad  \textrm{and}\quad \beta^2 - 4\alpha \neq 3 V^{4/3}.
\end{equation}

Let us remark that if \eqref{mainCond} does not hold, then  \eqref{star} has a solution. Indeed, if $\beta^2 - 4\alpha = 3 V^{4/3}$, then $k = 3$ and $y = 1$
is a solution of \eqref{star}. A direct substitution yields that $f > 0$ in such point if and only if $e > \frac{3}{2}$. 
Also, if $g > 0$, then the right hand side of \eqref{star} is of order $g/y^3$ and the left hand side is of order $1/y^2$ around zero. Let $y_0$ be the smallest positive root of the right hand side of \eqref{star}
and notice that $y_0 \leq 1$. Since the left hand side is always non-negative, by the intermediate value theorem, there is a solution $y^*$ of \eqref{star}. In addition, since $y_0 \leq 1$ was the first zero, 
we have that $f(w^*) \geq 0$ with $w^*$ corresponding to $y^*$, and consequently there is a point on $M$ with vanishing curvature. 

Finally notice that \eqref{star2} is equivalent to a polynomial of at most fourth degree, and therefore for any set of parameters, the set of points on $M$ with one principal curvature vanishing, 
consists of a union of at most four $N - 2$
dimensional surfaces. 
\end{proof}

Proposition \eqref{propM} allows us to use the following Fourier restriction result proved in \cite{MR155146} and \cite{Herz}. We refer to \cite{Herz} if $k=N-1$ and \cite{MR155146} if $f \equiv 1$.

\begin{thm}
\label{restthm}
Let $M\subset \R^N$ be a smooth compact and closed hypersurface with $k$ principal curvatures bounded away from $0$. Then, for any smooth $f$ in a neighbourhood of $M$, one has 
$$\left|\int_{M} e^{ix \xi} f(\xi) d\mathcal{H}^{N-1}(\xi) \right| \leq  C (1+|x|)^{-\frac{k}{2}}.$$
\end{thm}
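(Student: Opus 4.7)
The plan is to bound the oscillatory integral
\begin{equation}
I(x) = \int_M e^{ix\cdot \xi} f(\xi)\, d\mathcal{H}^{N-1}(\xi)
\end{equation}
by the method of stationary phase, with the rate of decay dictated by the effective rank of the Hessian of the phase restricted to $M$. Write $x = \lambda\omega$ with $\lambda = |x|$ and $\omega \in \mathbb{S}^{N-1}$. For $\lambda \leq 1$ the claim is trivial by compactness of $M$ and smoothness of $f$, so the main task is the regime $\lambda \to \infty$.

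First, I would split $M$ with a smooth partition of unity into a "non-critical" region, on which the tangential gradient $\nabla_M(\omega\cdot\xi)$ is bounded below, and finitely many small neighbourhoods of the critical points $\xi_0 \in M$ characterised by $\omega = \pm \nu(\xi_0)$, where $\nu$ is a unit normal. The number of critical points stays bounded uniformly in $\omega$ because the Gauss map $\nu \colon M \to \mathbb{S}^{N-1}$ is a smooth map between compact manifolds. On the non-critical piece, repeatedly integrating by parts against the tangential vector field $L = (i\lambda)^{-1}|\nabla_M(\omega\cdot\xi)|^{-2}\,\nabla_M(\omega\cdot\xi)\cdot\nabla_M$ gains a factor $\lambda^{-1}$ each time, so after sufficiently many iterations one obtains an $O(\lambda^{-k/2-1})$ contribution that is negligible.

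Near each critical point $\xi_0$ I would parametrise $M$ locally as a graph $\xi = \xi_0 + (y, \phi(y))$ over $T_{\xi_0}M$ with $\phi(0)=0$ and $\nabla\phi(0)=0$. The phase becomes $\omega\cdot\xi_0 + (\omega\cdot\nu(\xi_0))\phi(y)$, whose Hessian at $0$ coincides, up to sign, with the second fundamental form of $M$ at $\xi_0$, i.e.\ the diagonal matrix of principal curvatures. By hypothesis, $k$ of these are bounded below in absolute value uniformly on $M$. After a linear change of variables diagonalising the shape operator and writing $y=(y',y'')$ with $y'\in \R^k$ in the non-degenerate directions, the local contribution takes the form
\begin{equation}
\int_{\R^{N-1-k}}\!\!\int_{\R^k} a(y', y'')\, e^{i\lambda \Phi(y', y'')}\, dy'\, dy'',
\end{equation}
where $a$ is smooth and compactly supported, $\partial_{y'}\Phi$ vanishes at $y'=0$, and $\partial^2_{y'}\Phi(0, y'')$ is non-degenerate uniformly in $y''$.

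The remaining step is a parameter-dependent stationary phase estimate in $y'$: by the implicit function theorem the $y'$-critical point depends smoothly on $y''$, and the classical stationary phase expansion applied to the inner $y'$-integral gives
\begin{equation}
\left|\int_{\R^k} a(y', y'') e^{i\lambda \Phi(y', y'')}\, dy'\right| \leq C\lambda^{-k/2}
\end{equation}
uniformly in $y''$. Integrating over the compact $y''$-support costs only an $O(1)$ factor, and summing the contributions of the finitely many critical points yields $|I(x)| \leq C\lambda^{-k/2}$, which combined with the trivial small-$\lambda$ bound gives the announced $(1+|x|)^{-k/2}$ estimate. The main technical obstacle is ensuring that the stationary phase remainder is genuinely uniform in the parameter $y''$, precisely because the curvatures in the $y''$-directions may degenerate or even vanish; one has to verify that the quantitative non-degeneracy in $y'$ survives independently of $y''$, for which Morse's lemma with parameters (or the Malgrange preparation theorem) is the natural tool.
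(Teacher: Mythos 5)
The paper does not actually prove this theorem: it is quoted as a classical Fourier decay result, with the reader referred to Littman \cite{MR155146} (for $f\equiv 1$) and Herz \cite{Herz} (for $k=N-1$), so there is no internal proof to compare against. Your stationary-phase argument is essentially the standard proof of Littman's theorem, and its architecture --- integration by parts away from the points where $\omega=\pm\nu(\xi_0)$, local graph coordinates at such points, splitting the tangential variables into the $k$ uniformly non-degenerate directions $y'$ and the remaining directions $y''$, and stationary phase in $y'$ with $y''$ as a parameter --- is the right one and gives the claimed $(1+|x|)^{-k/2}$ bound; it even covers the general case (arbitrary $k$ together with a smooth density $f$) in one stroke, which the paper only gets by combining the two references.

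Two points in your sketch need repair. First, your justification that the critical set is finite (``because the Gauss map is a smooth map between compact manifolds'') is not valid: smoothness plus compactness does not force preimages to be finite, and when $k<N-1$ --- exactly the case this paper needs, since one principal curvature of $M$ may vanish on an $(N-2)$-dimensional set --- the set $\{\xi\in M:\ \nu(\xi)=\pm\omega\}$ can be a positive-dimensional submanifold (for the torus in $\R^3$ and vertical $\omega$ it consists of two circles, which are precisely where the degenerate curvature sits). This does not sink the argument, because all you need is a finite cover of the compact critical set by graph patches, and your $y'$-stationary phase with $y''$ as a parameter is designed to absorb non-isolated critical points; but the finiteness claim as stated is false and should be replaced by a compactness/finite-cover argument. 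Second, the constant must be uniform in the direction $\omega=x/|x|$: the patch sizes, the lower bound on $|\nabla_M(\omega\cdot\xi)|$ on the non-critical region, and the lower bound on the $y'$-block of the Hessian all depend a priori on the pair $(\xi_0,\omega)$, so they should be extracted uniformly by compactness of the incidence set $\{(\xi,\omega):\ \nu(\xi)=\pm\omega\}\subset M\times\mathbb{S}^{N-1}$; you address uniformity in $y''$ (via Morse lemma with parameters, which is the correct tool) but never in $\omega$. With these two repairs your outline is a complete proof of the cited result.
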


Instead of considering the integral over $\R^N$, we restrict the integration domain to 
$M_\tau :=\{\xi \in \R^N : F(\xi) =P(|\xi|)-\xi V=\tau \} $, for $|\tau|\leq \rho$ sufficiently small. Specifically, we 
define a cut-off function $\chi \in C^\infty_0 (\R)$ such that $0\leq \chi \leq 1$, $\textrm{supp} (\chi) \subset [-\rho, \rho] $ and $\chi \equiv 1$ on $(-\rho/2, \rho/2)$.
Define $\tilde{\chi}(\xi) = \chi(F(\xi))$ and consider
separately 
\begin{align}
G_{\varepsilon}^1 (x) &= \int_{\R^N} \dfrac{1 - \tilde{\chi} (\xi)}{P(|\xi |) - \xi V -i\varepsilon} e^{ix\xi} d\xi \\
G_{\varepsilon}^2 (x) &= \int_{\R^N} \dfrac{\tilde{\chi} (\xi)}{P(|\xi |) - \xi V -i\varepsilon} e^{ix\xi} d\xi. 
\end{align}
First, we observe that $G_{\varepsilon}^1$ is the Fourier transform of a function $Q$, with the property that $D^\gamma Q \in L^1 (\R^N)$ for any sufficiently large $|\gamma|$, where $\gamma$ is a multi-index. 
Thus, $|G_{\varepsilon}^1 (x)| \leq C_\gamma |x|^{-|\gamma|}$ for large $|\gamma|$, and therefore $G_{\varepsilon}^1$ has arbitrarily (polynomial) fast decay at infinity. 
To estimate $G_{\varepsilon}^2$ we use the coarea formula 
\begin{equation}
\label{greenint}
G_\varepsilon^2 (\xi) = \int_\R \dfrac{\chi (\tau )}{\tau - i\varepsilon} \displaystyle\left( \int_{M_\tau} \dfrac{e^{ix \xi}}{|\nabla F(\xi )|} d\mathcal{H}^{N-1}(\xi) \right) d\tau=
\int_{-\rho}^\rho \dfrac{\chi (\tau )}{\tau - i\varepsilon}a_x (\tau) d\tau , 
\end{equation}
where 
$$a_x (\tau)=\int_{M_\tau} \dfrac{e^{ix \xi}}{|\nabla F(\xi )|} d\mathcal{H}^{N-1}(\xi)  .$$  
Then, (A2) yields $\nabla F(\xi) \neq 0$ for $\xi \in M_0$. Therefore by continuity, decreasing $\rho > 0$ if necessary, we can assume that $\nabla F(\xi) \neq 0$ for any $\xi \in M_0$ and $|\tau| < \rho$.  
Since $F$ is smooth, we have in addition  $\left\|\dfrac{1}{\nabla F(\xi )} \right\|_{C^{\gamma,\tilde{\alpha}}(U) }\leq C$, for all $\gamma >0$ and $\tilde{\alpha}\in (0,1)$. 
Hence, from Proposition \ref{propM} and Theorem \ref{restthm}, it follows that
\begin{equation}
\label{phase0}
|a_x (0)| \leq C (1+|x|)^{-\frac{k}{2}}, 
\end{equation}
with 
\begin{equation}
\label{deftildeN}
k =\begin{cases}
N-1,\ &\textrm{if }\ \eqref{assumption1}\ \textrm{holds},\\ 
N-2,\ &\textrm{if } \ \eqref{assumption2}\ \textrm{holds}.
\end{cases}
\end{equation}
 Proceeding for instance as in \cite{MR3949725} (see \cite[(43)]{MR3949725} ), one can also show that if $t>0$ is small enough, then for some $b>0$,
\begin{equation}
\label{phase1}
|a_x (t) - a_x (0)| \leq C t^{b} (1+|x|)^{-k /2}.
\end{equation}
Observe that we already obtained in \eqref{phase0} the decay of $a_x$ as a function of $x$. The H\" older continuity in time $t$ follows by trivial modifications from \cite{MR3949725}. 

 Next, we to use the following proposition
\begin{prop}  \cite[Proposition 7]{MR3949725}
\label{mandel7}
Fix $\lambda \in \R$ and $\rho \in (0,\infty]$ and assume that $a:[\lambda -\rho ,\lambda +\rho] \rightarrow \R$ is measurable such that $|a(\lambda +t) - a(\lambda)| \leq \omega (|t|)$ where $t\rightarrow \omega (t) /t$ is integrable over $(0,\rho)$. Then, for any $\varepsilon >0$,
\begin{align*}
&\left|\int_{\lambda - \rho}^{\lambda +\rho} \frac{a(\tau)}{\tau -\lambda \mp i\varepsilon} d\tau - p.v. \int_{\lambda -\rho}^{\lambda +\rho} \dfrac{a(\tau)}{\tau -\lambda} d\tau \mp i\pi a (\lambda )\right|\\
&\leq \int_0^\rho \dfrac{2\varepsilon}{\sqrt{t^2 +\varepsilon^2}} \dfrac{\omega (t)}{t} dt + \left(\pi - 2 \arctan \left(\frac{\rho}{\varepsilon}\right)\right) |a(\lambda )|,
\end{align*}
and
$$\left|\int_{\lambda -\rho}^{\lambda +\rho} \dfrac{a(\tau)}{\tau -\lambda \mp i\varepsilon} d\tau \right| \leq 2\pi \left(\int_0^\rho \dfrac{\omega (t)}{t}dt +|a(\lambda)| \right). $$
\end{prop}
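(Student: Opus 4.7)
The plan is to reduce to the case $\lambda=0$ by translation (and to the minus sign in $\mp$, the other being symmetric), then split the integrand according to the classical Sokhotski--Plemelj decomposition
\begin{equation*}
\frac{a(\tau)}{\tau - i\varepsilon} = \frac{a(\tau) - a(0)}{\tau - i\varepsilon} + \frac{a(0)}{\tau - i\varepsilon}\,.
\end{equation*}
The second piece can be integrated explicitly: using $\frac{1}{\tau - i\varepsilon} = \frac{\tau}{\tau^2+\varepsilon^2} + i\frac{\varepsilon}{\tau^2+\varepsilon^2}$ and the evenness of the real part over $[-\rho,\rho]$, one obtains
\begin{equation*}
\int_{-\rho}^{\rho} \frac{d\tau}{\tau - i\varepsilon} = 2i\arctan(\rho/\varepsilon)\,.
\end{equation*}
Comparing with the target constant $i\pi$ gives exactly the term $(\pi - 2\arctan(\rho/\varepsilon))|a(0)|$ in the first estimate.

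Next, I would handle the first piece by comparing it with the principal value. Writing
\begin{equation*}
\frac{1}{\tau - i\varepsilon} - \frac{1}{\tau} = \frac{i\varepsilon}{\tau(\tau - i\varepsilon)}\,,
\end{equation*}
and using that $a(0)/\tau$ integrates to zero in the principal-value sense by odd symmetry, I get
\begin{equation*}
\int_{-\rho}^{\rho}\!\frac{a(\tau) - a(0)}{\tau - i\varepsilon}\,d\tau \;-\; \mathrm{p.v.}\!\int_{-\rho}^{\rho}\!\frac{a(\tau)}{\tau}\,d\tau \;=\; i\varepsilon\!\int_{-\rho}^{\rho}\!\frac{a(\tau)-a(0)}{\tau(\tau - i\varepsilon)}\,d\tau\,.
\end{equation*}
The modulus-of-continuity hypothesis $|a(\tau)-a(0)|\leq \omega(|\tau|)$ together with $|\tau - i\varepsilon| = \sqrt{\tau^2+\varepsilon^2}$ bounds the right-hand side in absolute value by $\int_0^{\rho} \frac{2\varepsilon}{\sqrt{t^2+\varepsilon^2}} \frac{\omega(t)}{t}\,dt$, which is precisely the first term of the claimed estimate. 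Combining with the previous paragraph gives the first inequality by the triangle inequality.

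For the second (cruder) estimate, I keep the same decomposition. The piece $a(0)\int_{-\rho}^{\rho}\frac{d\tau}{\tau - i\varepsilon} = 2i a(0)\arctan(\rho/\varepsilon)$ is bounded in modulus by $\pi|a(0)|$. For the remainder, the trivial bound $|\tau - i\varepsilon|\geq |\tau|$ yields
\begin{equation*}
\left|\int_{-\rho}^{\rho}\frac{a(\tau) - a(0)}{\tau - i\varepsilon}\,d\tau\right| \leq \int_{-\rho}^{\rho} \frac{\omega(|\tau|)}{|\tau|}\,d\tau = 2\int_0^{\rho}\frac{\omega(t)}{t}\,dt\,,
\end{equation*}
where finiteness is guaranteed by the integrability hypothesis. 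Summing the two contributions gives an estimate by $2\int_0^\rho \omega(t)/t\,dt + \pi|a(0)|$, which is dominated by the stated $2\pi(\int_0^\rho \omega(t)/t\,dt + |a(0)|)$.

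There is no real obstacle in the argument; the whole proof rests on the single identity $\frac{1}{\tau - i\varepsilon} - \frac{1}{\tau} = \frac{i\varepsilon}{\tau(\tau - i\varepsilon)}$ and the explicit evaluation of $\int \frac{d\tau}{\tau - i\varepsilon}$. The only mild subtlety is to notice that the principal value is automatically well-defined under the modulus-of-continuity assumption (since the singular integrand $(a(\tau)-a(0))/\tau$ is absolutely integrable on $(-\rho,\rho)$ by the hypothesis that $t\mapsto \omega(t)/t$ is integrable), so the ``p.v.'' symbol can effectively be dropped once $a(0)/\tau$ has been subtracted off.
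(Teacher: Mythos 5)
Your proof is correct: the Sokhotski--Plemelj splitting $a(\tau)=\bigl(a(\tau)-a(\lambda)\bigr)+a(\lambda)$, the explicit evaluation $\int_{-\rho}^{\rho}\frac{d\tau}{\tau-\lambda\mp i\varepsilon}=\pm 2i\arctan(\rho/\varepsilon)$, and the identity $\frac{1}{\tau-\lambda\mp i\varepsilon}-\frac{1}{\tau-\lambda}=\frac{\pm i\varepsilon}{(\tau-\lambda)(\tau-\lambda\mp i\varepsilon)}$ yield exactly the two stated bounds, and this is essentially the same elementary argument as in the cited source (the paper itself gives no proof, quoting Mandel's Proposition~7). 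The only point worth flagging is the case $\rho=\infty$ allowed by the statement, where these integrals are only conditionally convergent and must be read as symmetric limits; for the finite $\rho$ actually used in this paper your argument is complete as written.
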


We have all ingredients to estimate the decay of $G$.

\begin{prop}
\label{asymgreen}
Assume that (A1), (A2) and (A3) hold. Let $k$ be as in \eqref{deftildeN}, that is, $k$ is the number of principal curvatures of $M$ staying bounded away from $0$. Then,  for $|x|\geq 1$,
\begin{equation}
\label{greenzero1}
|G(x)|\leq C (1+|x|)^{-k /2},
\end{equation}
and, for $|x|\leq 1$,
\begin{equation}
\label{greenzero}
|G(x)|\leq C\begin{cases} |x|^{-(N-4)},& \textrm{if}\ N>4,\\ |\log |x||,& \textrm{if} \ N=4,\\ 1,& \textrm{if}\ N<4 .\end{cases}
\end{equation}

\end{prop}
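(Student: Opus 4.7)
The plan is to write $G$ as the distributional limit of $G_\varepsilon = G_\varepsilon^1 + G_\varepsilon^2$ as $\varepsilon \to 0^+$, bound each term uniformly in $\varepsilon$, and pass to the limit. The two pieces play complementary roles: $G^1$ (smooth integrand, no Helmholtz singularity) controls the behavior at the origin, while $G^2$ (localized near $M$) controls the decay at infinity.

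For the decay at infinity \eqref{greenzero1}, the excerpt already shows super-polynomial decay $|G_\varepsilon^1(x)|\leq C_\gamma|x|^{-|\gamma|}$. For $G_\varepsilon^2$, I would apply Proposition \ref{mandel7} with $\lambda=0$ to the coarea representation \eqref{greenint}, combining the point bound $|a_x(0)|\leq C(1+|x|)^{-k/2}$ from \eqref{phase0} with the H\"older modulus $\omega(t) = Ct^b(1+|x|)^{-k/2}$ from \eqref{phase1}. Since $\int_0^\rho \omega(t)/t\,dt \leq C(1+|x|)^{-k/2}$, the second inequality of Proposition \ref{mandel7} yields $|G_\varepsilon^2(x)|\leq C(1+|x|)^{-k/2}$ uniformly in $\varepsilon$, and passing to the limit gives \eqref{greenzero1}.

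For the local bound \eqref{greenzero} near the origin, I would first treat $G^2$ by applying Proposition \ref{mandel7} with the trivial uniform bounds $|a_x(\tau)|\leq C$ (from compactness of $M_\tau$ and boundedness of the integrand) together with $\omega(t)=Ct^b$, obtaining $|G_\varepsilon^2(x)|\leq C$ uniformly in $\varepsilon,x$, an admissible additive constant. For $G^1$, since $|P(|\xi|)-V\xi|\geq \rho/2$ on $\supp(1-\tilde\chi)$, I can set $\varepsilon=0$ directly and interpret
\begin{equation*}
G^1(x) = \int_{\R^N} m(\xi) e^{ix\xi}\,d\xi,\qquad m(\xi) := \frac{1-\tilde\chi(\xi)}{P(|\xi|)-V\xi},
\end{equation*}
as a tempered distribution, with $m$ smooth and $|D^\alpha m(\xi)|\leq C_\alpha (1+|\xi|)^{-4-|\alpha|}$. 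Split $G^1 = I_1 + I_2$ at $|\xi|=1/|x|$. Estimate $I_1$ trivially by $\int_{|\xi|\leq 1/|x|} |m|\,d\xi \leq C + C\int_1^{1/|x|} r^{N-5}\,dr$, which evaluates to $C|x|^{-(N-4)}$, $C|\log|x||$, or $C$, according to $N>4$, $N=4$, $N<4$, respectively. Estimate $I_2$ by iterated integration by parts, writing $e^{ix\xi} = |x|^{-2M}(-\Delta_\xi)^M e^{ix\xi}$ with $M>(N-4)/2$: the bulk contribution is bounded by $|x|^{-2M}\int_{|\xi|>1/|x|}|\xi|^{-4-2M}\,d\xi = C|x|^{-(N-4)}$, and the boundary contributions on $\partial B_{1/|x|}$ scale identically since the gain $|D^\alpha m|\leq C|\xi|^{-4-|\alpha|}$ is exactly balanced by $|D^\alpha e^{ix\xi}|\leq C|x|^{|\alpha|}$ evaluated on $|\xi|=1/|x|$.

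\emph{Main obstacle.} The most technical point is the integration-by-parts estimate for $I_2$: every bulk and boundary term must be checked to carry the same homogeneous scaling $|x|^{-(N-4)}$ (or the appropriate $|\log|x||$ or bounded analogue) across all dimensions, and the borderline dimensions (where intermediate estimates naively produce logarithmic corrections) must be handled carefully so that those corrections are absorbed into the correct case of \eqref{greenzero}.
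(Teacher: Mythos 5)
Your treatment of the decay estimate \eqref{greenzero1} is essentially the paper's own argument: split $G_\varepsilon=G^1_\varepsilon+G^2_\varepsilon$, use the arbitrary polynomial decay of $G^1_\varepsilon$, and apply Proposition \ref{mandel7} with $\lambda=0$ to the coarea representation \eqref{greenint} of $G^2_\varepsilon$, feeding in \eqref{phase0} and the modulus $\omega_x(t)=Ct^b(1+|x|)^{-k/2}$ from \eqref{phase1}. The only substantive difference there is that you stop at the uniform-in-$\varepsilon$ bound from the second inequality of Proposition \ref{mandel7}; since $G$ is defined by the limiting absorption principle, the paper additionally invokes the first inequality to show $|G^2_\varepsilon(x)-G^2_\pm(x)|\leq C\varepsilon^b(1+|x|)^{-k/2}$, i.e.\ pointwise convergence of $G^2_\varepsilon$, which is what makes ``passing to the limit'' legitimate rather than only a $\limsup$ bound --- a one-line addition you should make. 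Where you genuinely diverge is the local estimate \eqref{greenzero}: the paper simply cites the classical bound for fundamental solutions of fourth-order elliptic operators (Tanabe, Theorem 5.7), while you prove it directly, bounding $G^2_\varepsilon$ uniformly via Proposition \ref{mandel7} with $|a_x(\tau)|\leq C$ and $\omega(t)=Ct^b$ (valid since \eqref{phase1} is uniform for $|x|\leq 1$), and treating $G^1$ as the inverse Fourier transform of a symbol $m$ with $|D^\alpha m(\xi)|\leq C_\alpha(1+|\xi|)^{-4-|\alpha|}$, split at $|\xi|=1/|x|$. This buys self-containedness and makes transparent where the $|x|^{-(N-4)}$, $\log$, and bounded cases come from, at the cost of the boundary-term bookkeeping you flag; using a smooth dyadic partition of unity in $\xi$ instead of the sharp cutoff at $|\xi|=1/|x|$ would eliminate those boundary terms and shorten the verification. (As in the paper's statement, the $N=4$ case should be read as $C(1+|\log|x||)$, which is what both arguments actually give.)
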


\begin{proof}
The estimate \eqref{greenzero} is classical (see for example \cite[Theorem 5.7]{Tanabe1997}) so we focus on \eqref{greenzero1}, where we follow the  framework from  \cite[Proof of Proposition $3$]{MR3949725}.

By \eqref{phase1}, we obtain that $\chi a_x$ satisfies 
 $|(\chi a_x) (t) - (\chi a_x) (0)|\leq  \omega_x (t)= Ct^b (1+|x|)^{-k /2}$ with integrable $t\rightarrow \omega_x (t) /t$ on $(0,\rho)$.
 If we define
$$
G_\pm^2 (x) = p.v. \int_{-\rho}^\rho \dfrac{\chi (\tau )}{\tau } a_x (\tau) d\tau  \pm i\pi a_x (0) 
\,,
$$
then Proposition \ref{mandel7} with $a = \chi a_x$, $\varepsilon = 1$, $\lambda = 0$ and \eqref{phase0}, yield 
\begin{align*}
|G^2_\pm (x)| &\leq\left| G^2_\pm (x) - \int_{- \rho}^{\rho} \frac{a(\tau)}{\tau  \mp i} d\tau\right| + 
\left| \int_{ - \rho}^{\rho} \frac{a(\tau)}{\tau  \mp i} d\tau\right|\\
& \leq C \left(\int_0^\rho \dfrac{\omega_x (t) }{t} dt+|a_x (0|\right)\\
&\leq C  (1+|x|)^{-k /2} \int_0^\rho t^{b -1} dt +C (1+|x|)^{-k /2}\\
&\leq C (1+|x|)^{-k /2}.
\end{align*}
Also, assuming that $\varepsilon >0$, the first estimate of Proposition \ref{mandel7}, and  \eqref{phase0} imply
\begin{align*}
|G^2_\varepsilon (x) - G^2_+ (x)|& \leq \int_0^\rho \dfrac{2\varepsilon}{\sqrt{\varepsilon^2 +t^2}} \dfrac{\omega_x (t)}{t}dt  + \left(\pi -2\arctan\left(\frac{\rho}{\varepsilon}\right)\right)|a_x (0)|\\
&\leq C (1+|x|)^{-k /2} \int_0^\rho \dfrac{\varepsilon}{\sqrt{\varepsilon^2 +t^2}} t^{b -1} dt +C (1+|x|)^{-k /2} \dfrac{2\varepsilon}{\rho}\\
&\leq C \varepsilon^b (1+|x|)^{-k /2}
\end{align*}
and analogous inequality for $\varepsilon <0$ after replacing $G^2_+$ by $G_-^2$. 
Therefore, since $b>0$, we deduce that $G^2_\varepsilon$ converges pointwise to $G^2_\pm$  when $\varepsilon \rightarrow 0^\pm$. 

Overall, we get
\begin{align}
|G(x)| &= \limsup_{\epsilon \to 0} |G_\epsilon(x)| \leq \limsup_{\epsilon \to 0} |G^1_\epsilon(x)| +  |G^2_\epsilon(x)| \\
&\leq  
\limsup_{\epsilon \to 0^\pm} |G^1_\epsilon(x)| +  |G^2_\epsilon(x) - G^2_\pm| + |G^2_\pm| \leq  C (1+|x|)^{-k /2} \,,
\end{align}
as desired. 
\end{proof}

\section{Resolvent estimate and application to the nonlinear Helmholtz equation}

In this section we investigate the boundedness of the resolvent $\mathfrak R "=" (\Delta^2 +\beta \Delta +iV \nabla +\alpha)^{-1}$ as an operator from $L^p (\R^N)$ to $L^q (\R^N )$. 
The obtained estimates are crucially used in the construction of solution to the nonlinear Helmholtz equation. 
As in previous sections, we define $\mathfrak R$, by the limit absorption principle, that is, 
\begin{equation}
\label{eq:def_resolvent}
\mathfrak R f:= \lim_{\varepsilon \rightarrow 0^+} \mathfrak R_{i\varepsilon} f,
\end{equation}
 where
$$ (\mathfrak R_{i\varepsilon} f)(x):=\dfrac{1}{(2\pi)^{N/2}} \int_{\R^N} e^{ix\xi} \dfrac{\hat{f} (\xi )}{|\xi|^4 -\beta |\xi|^2 + V \xi +\alpha -i\varepsilon}.  $$
To study the mapping properties of $\mathfrak R$, we use ideas from \cite[Proof of Theorem~6]{Gu}
 (see also~\cite[Theorem~2.1]{Ev} and  \cite[Theorem $3.3$]{MR3977892}). Our proof relies on the decay properties of the Green function
  established in the previous section and on the Stein-Tomas Theorem, which was proved in  \cite{MR2831841} and improves  \cite{MR1810754, MR1882456}. 
 We also refer to \cite{MR3556359} for an example proving that the used results are sharp in some sense.

\begin{thm}\cite{MR2831841} \label{STT}
  Let $1\leq p\leq \frac{2(k+2)}{k+4}$ and $M$ be a smooth, closed and compact manifold with $k$ non-null principal curvatures. Then, there is a $C>0$ such that for all $g\in \mathcal S(\R^N)$ the
  following inequality holds: 
  \begin{equation*} 
    \Big(\int_{M} |\hat{g}(\omega)|^2 \,d\sigma(\omega)\Big)^{1/2} 
    \leq C \|g\|_{L^p(\R^N)}.
  \end{equation*} 
\label{STthm}
\end{thm}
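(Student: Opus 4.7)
The plan is to prove Theorem \ref{STT} by the classical complex interpolation argument of Stein-Tomas, adapted to the case where $M$ has only $k\le N-1$ non-vanishing principal curvatures.

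\emph{Step 1 (duality and $TT^*$).} By duality, the claimed restriction inequality is equivalent to the extension estimate $\|\widehat{F\,d\sigma}\|_{L^{p'}(\R^N)} \le C\,\|F\|_{L^2(M,d\sigma)}$. Squaring and applying the $TT^*$ identity further reduces the problem to the convolution bound
\[
\|f*\widehat{d\sigma}\|_{L^{p'}(\R^N)} \le C\,\|f\|_{L^p(\R^N)},
\]
which becomes the target of the interpolation argument.

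\emph{Step 2 (analytic family).} Choose a smooth defining function $\phi$ of $M$ in a tubular neighborhood $U\supset M$ (so $\{\phi=0\}=M$ and $|\nabla\phi|\neq 0$ on $U$) and a cutoff $\psi\in C_c^\infty(U)$ with $\psi\equiv 1$ near $M$. Define the admissible analytic family of tempered distributions
\[
K_z := \mathcal{F}^{-1}\left(\frac{e^{z^2}}{\Gamma(z)}\,\phi_+^{z-1}\,\psi\right),
\]
normalized so that $K_0$ is a constant multiple of $\widehat{d\sigma}$, and set $T_z f := K_z*f$. On the line $\mathrm{Re}(z)=1$ the multiplier is a uniformly bounded compactly supported function, so Plancherel yields $T_z:L^2\to L^2$ with norm $\le C(1+|\mathrm{Im}(z)|)^A$.

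\emph{Step 3 ($L^1\to L^\infty$ bound on the critical line).} To control $K_z(x)$ in $L^\infty$ on $\mathrm{Re}(z)=-k/2$, parametrize $U$ by the level sets $\{\phi=t\}$ through the coarea formula. For small $t$ these level sets inherit from $M$ exactly $k$ principal curvatures bounded away from zero (curvature being a stable geometric invariant), so Theorem \ref{restthm} yields the decay $(1+|x|)^{-k/2}$ for the Fourier integral over each level set, uniformly in $t$. A Mellin-type integration in $t$, regularized by $1/\Gamma(z)$ and combined with integration by parts in the non-stationary region, then produces $\|K_z\|_{L^\infty}\le C(1+|\mathrm{Im}(z)|)^A$, hence $T_z:L^1\to L^\infty$ uniformly on this line. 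This is the main obstacle: when $k<N-1$ the standard stationary-phase estimate in all $N-1$ tangential directions fails, and one must split the level-set integral into a non-stationary part (handled by $\phi$-integration by parts) and a $k$-dimensional stationary part where Theorem \ref{restthm} supplies exactly the $|\xi|^{-k/2}$ decay that balances the singular factor $\phi_+^{-k/2-1}/\Gamma(-k/2)$.

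\emph{Step 4 (interpolation).} Stein's analytic interpolation theorem applied to $T_z$ on the strip $-k/2\le\mathrm{Re}(z)\le 1$ at $\theta=2/(k+2)$ (chosen so that $(1-\theta)\cdot 1+\theta\cdot(-k/2)=0$) gives $T_0:L^p\to L^{p'}$ with $1/p=(1-\theta)/2+\theta=(k+4)/(2(k+2))$, that is $p=2(k+2)/(k+4)$. Through the $TT^*$ identification of Step 1 this is precisely the desired restriction inequality at the endpoint, and the full range $1\le p\le 2(k+2)/(k+4)$ follows by a further interpolation with the trivial bound $\|\hat g\|_{L^2(M)}\le |M|^{1/2}\|g\|_{L^1(\R^N)}$ coming from the Hausdorff-Young inequality.
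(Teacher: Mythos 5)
You should first note that the paper does not prove this statement at all: Theorem \ref{STT} is imported as a black box from Bak--Seeger \cite{MR2831841} (the $k$-nonvanishing-curvature variant of Stein--Tomas), so your argument cannot be compared with an internal proof and must stand on its own. Steps 1, 2 and 4 of your sketch are the standard $TT^*$ reduction, analytic family $\phi_+^{z-1}/\Gamma(z)$, and Stein interpolation scheme, and are fine in outline (the $L^1\to L^2(M)$ bound you invoke at the end is immediate from $\|\hat g\|_{L^\infty}\leq \|g\|_{L^1}$).

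The genuine gap is Step 3. Writing, via the coarea formula, $K_z(x)=\frac{e^{z^2}}{\Gamma(z)}\int_0^\delta t^{z-1}a_x(t)\,dt$ with $a_x(t)=\int_{\{\phi=t\}}e^{ix\xi}\,\psi\,|\nabla\phi|^{-1}d\sigma_t$, the uniform-in-$t$ Littman bound $|a_x(t)|\leq C(1+|x|)^{-k/2}$ does \emph{not} yield $\|K_z\|_{L^\infty}\lesssim 1$ on $\mathrm{Re}\,z=-k/2$: there $t^{z-1}$ has exponent with real part $-k/2-1<-1$, so the pairing is only defined by analytic continuation, i.e.\ by integrating by parts roughly $k/2+1$ times in $t$, and each $t$-derivative falling on the oscillatory factor $e^{ix\cdot\Phi_t(w)}$ costs a factor of order $|x|$; the naive outcome is a bound of size $(1+|x|)^{-k/2}|x|^{\lfloor k/2\rfloor+1}$, which is unbounded. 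What actually saves the critical-line estimate in the classical proof is the oscillation of $t\mapsto a_x(t)$ at frequency comparable to $|x|$ (the stationary points of the phase move essentially linearly in $t$), and extracting it requires the full stationary-phase asymptotics of $a_x(t)$ --- phase and amplitude together with its $t$-derivatives --- not merely the decay rate supplied by Theorem \ref{restthm}. This is exactly where the degenerate case $k<N-1$ is hard: the stationary set on $\{\phi=t\}$ is $(N-1-k)$-dimensional, and Theorem \ref{restthm} gives no information about the phase structure; moreover the constant there must be shown uniform over the family $\{\phi=t\}$, which you assert but do not prove. An alternative is the dyadic (physical-space) decomposition of $\widehat{d\sigma}$ --- essentially what the paper itself does for the resolvent in Theorem \ref{thminvopbound} --- but plain summation of the dyadic pieces misses precisely the endpoint $p=2(k+2)/(k+4)$ claimed in the statement, and recovering that endpoint is the nontrivial content of \cite{MR2831841}. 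As written, your Step 3 is an assertion of the hardest estimate rather than a proof of it.
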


Before we proceed, let us introduce some notation. 
Let $\psi \in \mathcal{S}(\R^N)$ be a function such that $\hat{\psi}\in C_c^\infty(\R^N)$ satisfies
$0\leq \hat{\psi}\leq 1$ and
\begin{equation} \label{eq:def_psi}
  \hat{\psi} (\xi)=\begin{cases}
    1 ,\ \textrm{if}\ \textrm{dist}(\xi , M )\leq c_1,\\ 
    0 ,\ \textrm{if}\ \textrm{dist}(\xi , M)\geq 2c_1,
    \end{cases}
\end{equation}
for some $c_1$ small enough detailed below. As above, define $G_1 := (1 - \psi) \ast G$ and $G_2:= \psi \ast G$ and $\mathfrak{R}_i f = G_i*f$ for $i = 1, 2$.  
First, we obtain estimates for $\mathfrak{R}_1$.

\begin{prop}
For every $c_1 > 0$, any $p, q \in (1, \infty)$ with $q > p$ and
 \begin{align}
  \label{condgutie}
	\frac{1}{p}-\frac{1}{q} 
	\begin{cases}
	  \leq 1 &\textrm{if } N<4,\\
	  < 1 &\textrm{if } N=4,\\
	  \leq \frac{4}{N} &\textrm{if } N\geq 4,
	\end{cases}
	\end{align}
 the operator $\mathfrak{R}_1$ extends to a
  bounded linear operator $\mathfrak{R}:L^p(\R^N)\to L^q(\R^N)$, that is, there exists $C = C_{p, q}$ such that 
  \begin{equation} 
    \|\mathfrak{R} f \|_{L^q(\R^N)}
    \leq C \|f\|_{L^p (\R^N)} \,.
  \end{equation}
\end{prop}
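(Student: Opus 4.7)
The plan is to factor the Fourier symbol of $\mathfrak R_1$ as a product of a Bessel-potential symbol of order $-4$ and a zero-order Hörmander--Mikhlin multiplier, and then combine the $L^p$-boundedness of the latter with the $L^p\to L^q$-mapping properties of the former. Concretely, I would set
\begin{equation*}
m(\xi) := \frac{(1+|\xi|^2)^{2}\,(1-\hat\psi(\xi))}{P(|\xi|)-V\xi},
\end{equation*}
so that $\widehat{G_1}(\xi)=m(\xi)\,(1+|\xi|^2)^{-2}$ up to a harmless constant. On the operator side this means $\mathfrak R_1=(I-\Delta)^{-2}\circ m(D)$, and the proof reduces to verifying one fact about each factor.

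The central technical step is to show that $m$ is $C^\infty(\R^N)$ and satisfies the Mikhlin-type bounds $|\partial^\alpha m(\xi)|\le C_\alpha (1+|\xi|)^{-|\alpha|}$ for every multi-index $\alpha$. The zeros of $P(|\xi|)-V\xi$ form exactly $M$, and by (A2) they are simple, but the cutoff $1-\hat\psi$ vanishes \emph{identically} on an open neighborhood of $M$, so the quotient extends smoothly across $M$. Assumption (A3) forces $\alpha\neq 0$, hence $0\notin M$ and $m$ is smooth near the origin. For large $|\xi|$ the leading $|\xi|^4$ terms of numerator and denominator match, so $m(\xi)\to 1$, and the symbol-type decay of all derivatives follows from a direct (if slightly tedious) differentiation of the resulting smooth rational expression on $\{|\xi|\ge R\}$.

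Once this is established, the classical Hörmander--Mikhlin multiplier theorem gives that $m(D)\colon L^p(\R^N)\to L^p(\R^N)$ is bounded for every $p\in(1,\infty)$. On the other hand, $(I-\Delta)^{-2}$ is convolution with the Bessel kernel of order $4$, which behaves like $|x|^{4-N}$ near the origin when $N>4$ (logarithmic if $N=4$, bounded if $N<4$) and decays exponentially at infinity. Young's inequality therefore covers the subcritical range $1/p-1/q<4/N$ and the Hardy--Littlewood--Sobolev inequality covers the endpoint $1/p-1/q=4/N$, giving $(I-\Delta)^{-2}\colon L^p(\R^N)\to L^q(\R^N)$ precisely in the range described by \eqref{condgutie}. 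Composing the two bounds then yields the claimed estimate.

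The step I expect to be the main obstacle is the Mikhlin verification across $M$, where one must argue that the vanishing of $1-\hat\psi$ not only cancels the simple pole of $(P(|\xi|)-V\xi)^{-1}$ pointwise but does so with uniform control of every derivative. The key observation is that $1-\hat\psi$ vanishes to infinite order on a whole open neighborhood of $M$, which reduces this point to a routine computation; the remaining ingredients (composition, the Mikhlin theorem, and the mapping properties of Bessel potentials) are classical.
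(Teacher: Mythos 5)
Your proof is correct, but it follows a genuinely different route from the paper's. The paper stays entirely on the kernel side: using Proposition \ref{asymgreen} for the local singularity together with the rapid decay of the part of the Green function whose symbol is supported away from $M$, it records the pointwise bound $|G_1(x)|\leq C\min\{|x|^{4-N},|x|^{-N}\}$ for $N>4$ (with the usual modifications for $N\leq 4$), deduces $G_1\in L^{r,w}(\R^N)$ for $\frac1r=1+\frac1q-\frac1p$, $r\in\bigl(1,\frac{N}{(N-4)_+}\bigr)$, and concludes with the weak Young inequality \cite[Theorem~1.4.24]{Grafakos}, which also covers the endpoint $\frac1p-\frac1q=\frac4N$. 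You instead argue on the Fourier side: since $1-\hat\psi$ vanishes identically on a neighbourhood of $M$, which is exactly the zero set of the quartic polynomial $P(|\xi|)-V\xi$, the symbol $m(\xi)=(1+|\xi|^2)^2(1-\hat\psi(\xi))/(P(|\xi|)-V\xi)$ is smooth, equals $1+O(|\xi|^{-2})$ at infinity, and obeys Mikhlin bounds, so $\mathfrak R_1=(I-\Delta)^{-2}m(D)$ is handled by H\"ormander--Mikhlin composed with the classical $L^p\to L^q$ mapping of the Bessel potential of order $4$ (Young below the endpoint, Hardy--Littlewood--Sobolev at $\frac1p-\frac1q=\frac4N$, since the Bessel kernel is dominated by $C|x|^{4-N}$ when $N>4$). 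Both arguments are complete; yours has the advantage of not needing the decay of $G_1$ at infinity (which in the paper rests on the Section~2 discussion of $G^1_\varepsilon$), of making transparent that this part of the analysis uses nothing beyond the compactness of the zero set and the quartic growth of the symbol (none of the curvature content of (A1)--(A3) enters), and of yielding the diagonal case $p=q$ as a by-product; the paper's argument is shorter given that the kernel estimates are already in hand and avoids multiplier theory altogether. Two cosmetic points: the appeal to (A3) to ensure $0\notin M$ is superfluous, because $m\equiv 0$ on a neighbourhood of $M$ and the denominator is a polynomial, smooth and nonvanishing off $M$, so smoothness of $m$ is automatic; and it is worth stating explicitly that although $\hat G$ is only a distribution (a principal value plus a surface term on $M$), the product $(1-\hat\psi)\hat G$ coincides with the pointwise quotient precisely because $1-\hat\psi$ annihilates a neighbourhood of $M$.
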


\begin{proof}
By Proposition \ref{asymgreen}, we have
\begin{equation}
\label{estG2}
|G_1 (x)|\leq \begin{cases} C \min \{|x|^{4-N},  |x|^{-N}\} &\text{if}\ N> 4,\\ 
  C \min \{1+ |\log|x||, |x|^{-N}\} &\text{if}\ N= 4,\\
C \min \{1,|x|^{-N} \}& \text{if}\ N<4, \\
\end{cases}
  \qquad\text{for all }x\in\R^N \setminus \{0\}  \,.
\end{equation}
Fix $p,q \in (1, \infty)$ as in \eqref{condgutie} and  assume $\frac{1}{p} - \frac{1}{q} < \frac{4}{N}$ if $N \geq 4$. Set $\frac{1}{r} = 1 + \frac{1}{q} - \frac{1}{p}$. 
Since $q > p$,  
then  $r\in \left(1, \frac{N}{(N-4)_+}\right)$, where we set $a/0 = \infty$. 
Since $G_1$  belongs to the weak Lebsegue space $L^{r, w}(\R^N)$,  the Young's convolution inequality
for weak Lebesgue spaces, (see ~\cite[Theorem~1.4.24]{Grafakos})  gives us
\begin{equation}\label{RT0}
  \|G_1 \ast f \|_{L^q(\R^N )} \leq \|G_1\|_{L^{r, w} (\R^N )} \|f\|_{L^p(\R^N )} \leq C \|f\|_{L^p(\R^N )} \,,
\end{equation}
as desired. If $N > 4$, and $\frac{1}{p} - \frac{1}{q} = \frac{4}{N}$, then $r < \infty$ and we proceed as above. 
\end{proof}

In the next result we establish the crucial bound on $\mathfrak R_2$.
Since $\psi$ in the definition of $G_2$ is a Schwartz function such that $\hat{\psi} \in C^\infty_c(\R^N)$, then its convolution yields a bounded function, and 
 by Proposition \ref{asymgreen},
\begin{equation} \label{gutiee1}
  |G_2 (x)|\leq C (1+|x|)^{-\frac{k}{2}} \quad\text{for all }x\in\R^N \,.
\end{equation}
Let $\eta \in C_{c}^\infty (\R^N) $ be a cut-off
function such that $\eta(x)=1$ for $|x| \leq 1$ and $\eta(x)=0$ if $|x|\geq 2$. For $j\in \N$ we define
$\eta_j(x):=\eta (x/2^j ) - \eta (x /2^{j-1})$ and $\eta_0 :=\eta$. Therefore, by \eqref{gutiee1},
\begin{equation} \label{eq:def_G1j}
  G_2=\sum_{j=0}^\infty G_2^j\quad \text{with}\; G_2^j:=G_2 \eta_j \text{ so that }
  |G_2^j(x)| \leq C 2^{-jk/2}1_{[2^{j-1},2^{j+1}]}(|x|),
\end{equation}
and
 \begin{equation}\label{eq:def_Qj}
   G_2\ast f =  \sum_{j=0}^\infty G_2^j\ast f \,. 
 \end{equation}

\begin{thm}  \label{thminvopbound}
  Assume that (A1), (A2), and (A3) hold and $k$ is as in Theorem \ref{STT}. Then, for any sufficiently small $c_1 > 0$ there is $C > 0$ such that 
  \begin{equation} \label{eq:resolvent_estimate}
    \|G_2^j* f \|_{L^2(\R^N)}
    \leq C 2^{j/2} \|f\|_{L^{2(k +2)/(k+4)}(\R^N)} ,
  \end{equation}
  for any $j \geq 1$ and $f \in L^{2(k +2)/(k+4)}(\R^N)$.
\end{thm}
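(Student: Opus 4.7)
My plan is to combine Plancherel with the coarea formula for $F(\xi)=P(|\xi|)-V\xi$, reducing the theorem to a bound on $\widehat{G_2^j}$ on level sets. Explicitly,
\begin{equation*}
\|G_2^j\ast f\|_{L^2}^{2} = \int_{\R^N}|\widehat{G_2^j}(\xi)|^2|\hat f(\xi)|^2\,d\xi = \int_\R\!\int_{M_\tau}|\widehat{G_2^j}(\xi)|^2|\hat f(\xi)|^2\,\frac{d\mathcal H^{N-1}(\xi)}{|\nabla F(\xi)|}\,d\tau.
\end{equation*}
The Fourier transform $\widehat{G_2}=\hat\psi\,\hat G$ is supported in the tube $\{\mathrm{dist}(\eta,M)\le 2c_1\}$, and $\widehat{G_2^j}=\widehat{G_2}\ast\widehat{\eta_j}$ with $\widehat{\eta_j}$ concentrated in a ball of radius $\sim 2^{-j}$, so $\widehat{G_2^j}$ is essentially localized to a $(2c_1+2^{-j})$-tube of $M$. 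Contributions from $|\tau|\ge 3c_1/4$ are then negligible by Schwartz decay of $\widehat{\eta_j}$. On $|\tau|\le 3c_1/4$, provided $c_1$ is small enough, $M_\tau$ inherits the $k$ non-vanishing principal curvatures of $M$ by continuity (a uniform version of Proposition \ref{propM}), so Theorem \ref{STT} gives $\int_{M_\tau}|\hat f|^2\,d\sigma_\tau\le C\|f\|_{L^{p_0}}^2$ with $p_0=\frac{2(k+2)}{k+4}$. Bounding $|\widehat{G_2^j}|^2\le\max_{M_\tau}|\widehat{G_2^j}|^2$ pulls $\hat f$ out of the inner integral and reduces the theorem to
\begin{equation*}
\int_{-3c_1/4}^{3c_1/4}\max_{\xi\in M_\tau}|\widehat{G_2^j}(\xi)|^2\,d\tau\le C\,2^{j}.
\end{equation*}

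\textbf{Key Fourier-side estimate.} By the Sokhotski--Plemelj formula underlying Section~2, one has $\widehat{G_2}=\hat\psi\,\bigl(\mathrm{p.v.}\,F^{-1}+i\pi\,\delta_M/|\nabla F|\bigr)$ as a distribution, so $\widehat{G_2^j}(\xi)=(\widehat{G_2}\ast\widehat{\eta_j})(\xi)$ splits into a principal-value term controlled by Proposition \ref{mandel7} applied to $a(s)=\int_{M_s}\hat\psi\,\widehat{\eta_j}(\xi-\cdot)|\nabla F|^{-1}\,d\mathcal H^{N-1}$, plus the $\delta_M$-term
\begin{equation*}
\pi\int_M\hat\psi(\eta)\,\widehat{\eta_j}(\xi-\eta)\,\frac{d\mathcal H^{N-1}(\eta)}{|\nabla F(\eta)|}.
\end{equation*}
Since $\widehat{\eta_j}$ has height $\sim 2^{jN}$ on a ball of radius $\sim 2^{-j}$, while assumption (A2) makes $\{M_s\}_{|s|\le\rho}$ a uniformly smooth transverse foliation, the $(N-1)$-area satisfies $|M_s\cap B(\xi,2^{-j})|\lesssim 2^{-j(N-1)}$ uniformly in $\xi$ and $s$. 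Combining these two estimates yields
\begin{equation*}
|\widehat{G_2^j}(\xi)|\le C\,2^{j}\quad\text{when } \mathrm{dist}(\xi,M)\lesssim 2^{-j},
\end{equation*}
and rapid decay in $2^j\,\mathrm{dist}(\xi,M)$ otherwise. Because $\mathrm{dist}(\xi,M)\asymp|\tau|$ for $\xi\in M_\tau$ (again by (A2)), the set of $\tau$ on which $\max_{M_\tau}|\widehat{G_2^j}|^2$ reaches $C\,2^{2j}$ has measure at most $C\,2^{-j}$, so the $\tau$-integral is bounded by $C\,2^{-j}\cdot 2^{2j}=C\,2^{j}$, proving the reduced estimate and hence the theorem.

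\textbf{Expected obstacle.} The naive Plancherel/Bernstein bound gives only $|\widehat{G_2^j}|_{L^\infty}\lesssim 2^{jN/2}$, which is far too weak (this is the $C\,2^{Nj}$ mentioned in the introduction). The sharp $2^{j}$ gain comes from genuine cancellation in the oscillatory structure of $G_2$ and the transversality of the family $\{M_s\}$. The technical heart is therefore twofold: (i) the uniform area estimate $|M_s\cap B(\xi,2^{-j})|\lesssim 2^{-j(N-1)}$, for which (A2) is used through a uniform implicit function theorem, and (ii) the uniform H\"older modulus in $s$ required by Proposition \ref{mandel7}, again with constants independent of $j$ and $\xi$. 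These are the "non-trivial cancellation properties" from (A2) alluded to in the introduction.
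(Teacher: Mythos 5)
Your first reduction (Plancherel, coarea for $F$, Stein--Tomas on the nearby level sets $M_\tau$, and the resulting target bound $\int \max_{M_\tau}|\widehat{G_2^j}|^2\,d\tau \lesssim 2^j$) is exactly the paper's. The divergence, and the gap, is in how you bound $\widehat{G_2^j}=\widehat{G_2}\ast\widehat{\eta_j}$. Two steps of your Fourier-side argument do not hold as stated. First, the claimed ``rapid decay in $2^j\,\mathrm{dist}(\xi,M)$'' away from $M$ is true only for the $\delta_M$-piece of the Plemelj splitting; the principal-value piece, supported in the whole $2c_1$-tube where $\hat\psi\neq 0$, decays only like $\mathrm{dist}(\xi,M)^{-1}$ there. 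Consequently your final step --- ``the set of $\tau$ where the max reaches $C2^{2j}$ has measure $\lesssim 2^{-j}$, hence the integral is $\lesssim 2^j$'' --- silently discards the intermediate region $2^{-j}\lesssim|\tau|\lesssim c_1$. Its contribution happens to be of the same order $2^j$ (since $\int_{2^{-j}}^{c_1}\tau^{-2}\,d\tau\sim 2^j$), but that needs a separate estimate you never give. Second, the control of the p.v.\ term ``by Proposition \ref{mandel7}'' is asserted, not executed: the function $a(s)=\int_{M_s}\hat\psi\,\widehat{\eta_j}(\xi-\cdot)\,|\nabla F|^{-1}d\mathcal H^{N-1}$ has height $\sim 2^j$ but Lipschitz constant $\sim 2^{2j}$, so the only available modulus is $\omega(t)\sim\min(2^{2j}t,\,2^j)$, and then $\int_0^\rho\omega(t)t^{-1}dt\sim j\,2^j$. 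Thus your route, made rigorous with the inputs you cite, yields at best $\int\max_{M_\tau}|\widehat{G_2^j}|^2 d\tau\lesssim j^2 2^j$, i.e.\ a logarithmic loss over \eqref{eq:resolvent_estimate} (harmless for the later summation over $j$, but not the stated theorem), and even that requires the missing intermediate-region bound.

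The cancellation you are missing is precisely what the paper's proof supplies, and it is of a different nature than the Plemelj splitting plus a level-set measure count. The paper keeps the coarea representation of $\widehat{G_2^j}$ with the singular factor $1/\tau$, transports $M_\tau$ to $M_0$ by the normal map $\phi_\tau(w)=w+\tau m(\tau)\nu(w)$, and pairs $\tau$ with $-\tau$ using the oddness of $1/\tau$. The resulting integrand is a difference of four terms (in $\hat\psi\circ\phi_{\pm\tau}$, $\hat\eta_1(2^{j-1}(\xi-\phi_{\pm\tau}))$, $|\nabla F\circ\phi_{\pm\tau}|^{-1}$, and the Jacobians $|J_{\pm\tau}|$), each gaining a factor $\tau$ by the mean value theorem; undoing the coarea formula then reduces everything to $2^{jN}\|\hat\eta_1(2^{j-1}\cdot)\|_{L^1}$ and $2^{jN}\|\nabla\hat\eta_1(2^{j-1}\cdot)\|_{L^1}$, which are bounded uniformly in $\xi$, $\tau$ and $j$. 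This symmetrization is what removes both the intermediate-region issue and the logarithm, and it is where (A2) (through the uniform non-degeneracy of the foliation $\{M_\tau\}$ and of $m$, $J_\tau$) actually enters. If you want to salvage your route, you must (i) prove and use the bound $|\widehat{G_2^j}(\xi)|\lesssim\min\bigl(2^j,\mathrm{dist}(\xi,M)^{-1}\bigr)$ on the tube rather than rapid decay, and (ii) replace the crude modulus fed into Proposition \ref{mandel7} by an argument exploiting cancellation in $a(s)-a(0)$ (essentially the paper's terms I--IV), otherwise the constant $C2^{j/2}$ in \eqref{eq:resolvent_estimate} is not reached.
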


\begin{proof}
 Using  Plancherel's Theorem and the coarea formula, we obtain, for $f\in\mathcal{S}(\R^N)$,
\begin{align*}
  \begin{aligned}
  \|G^j_2 \ast f\|_{L^2 (\R^N )}^2
  &\leq C \int_{-2c_1}^{2c_1}  \max_{\xi \in M_\tau}(|\hat{G}_2^j (\xi )|^2) \int_{M_\tau}
\dfrac{  |\hat{f}( \xi )|^2}{|\nabla F(\xi )|} d\sigma(\xi) \,d\tau .
  \end{aligned}
\end{align*}

Since $F$ is a smooth function with $\nabla F \neq 0$ on $M_0$, then by making $c_1$ smaller if necessary, we may assume $\nabla F \neq 0$ on $M_\tau$ for $|\tau| < 3c_1$. Then,  
$\xi \mapsto |\nabla F(\xi )|^{-1}$ is also smooth on $M_\tau$ for $|\tau| < 3c_1$ and since 
$M_\tau$, for $|\tau|$ sufficiently small, is smooth closed and compact manifold with $k$ principal curvatures bounded away from $0$, the Stein-Tomas Theorem (see Theorem~\ref{STT}) implies
\begin{align*} 
  \begin{aligned}
  \|G_2^j \ast f\|_{L^2 (\R^N )}^2
  &\leq C \|f\|_{L^{ 2(k+2)/(k+4)}(\R^N )}^2 \int_{-2c_1}^{2c_1}  \max_{\xi \in M_\tau}(|\hat{G}_2^j (\xi )|^2) d\tau. 
  \end{aligned}
\end{align*}
The coarea formula, yields
\begin{align}
\hat{G}_2^j (\xi)&= 2^{(j-1)N}\int_{-2c_1}^{2c_1} \frac{1}{\tau} \int_{M_\tau}  \frac{\hat{\psi} (s)  \hat{\eta}_1 (2^{j-1} (\xi - s))  }{|\nabla F (s)|}d\sigma (s) d\tau \,.
\end{align}
Let us define the map $\phi_\tau : M_\tau \rightarrow M_0$ such that $\phi_\tau (w)=w+ \tau m(\tau) \nu (w)$, where $\nu(w)$ is the unit exterior normal vector to $M_0$ at $w$
and $m$ is the smallest number such that $\phi_\tau(w) \in M_\tau$. Since $M_\tau$ is a level surface of $F$, then $\nabla F(w) = \nu(w)$, and by Taylor theorem,  for any sufficiently small $c_1$,  
there is a constant $c_2$ such that $c_2^{-1} \leq m(\tau) \leq c_2$ and $|m' (\tau)| \leq c_2$ for any $|\tau| < 3c_1$. Thus, $\phi_\tau$ is a smooth bijection, and therefore we can introduce the change 
of variables 
 $s= \phi_\tau (w)$ and obtain
$$\hat{G}_2^j (\xi) =
2^{(j-1)N}\int_{-2c_1}^{2c_1} \frac{1}{\tau} \int_{M_0}  \frac{\hat{\psi} (\phi_\tau(w))  \hat{\eta}_1 (2^{j-1} (\xi - \phi_\tau(w)))  }{|\nabla F (\phi_\tau(w))|}|J_{\tau} (w) | d\sigma (w) d\tau\,,
$$
where $|J_{\tau} (w)|$ is the Jacobian of the transformation $\phi_\tau$. 
Using the oddness of the function $\tau \rightarrow 1/\tau$, we have
\begin{align*}
\hat{G}_2^j (\xi) = 2^{(j-1)N}\int_{0}^{2c_1} \frac{1}{\tau} \int_{M_0} & (\frac{\hat{\psi} (\phi_\tau (w))  \hat{\eta}_1 (2^{j-1} (\xi - \phi_\tau (w) ))  }{|\nabla F (\phi_\tau (w))|}| J_\tau (w) | \\
&- \frac{\hat{\psi} (\phi_{-\tau} (w))  \hat{\eta}_1 (2^{j-1} (\xi -\phi_{-\tau} (w) ))  }{|\nabla F (\phi_{-\tau} (w))|}|J_{-\tau} (w) | d\sigma (w) d\tau.
\end{align*}
Next, write $\hat{G}_2^j$ as
$$\hat{G}_2^j (\xi)= 2^{(j-1)N}\int_{0}^{2c_1} \frac{1}{\tau} \int_{M_0}  (I+II+III+IV)d\sigma (w) d\tau, $$
where, after omitting the argument $w$,
$$I=\frac{(\hat{\psi} (\phi_\tau)-\hat{\psi} (\phi_{-\tau}) )  \hat{\eta}_1 (2^{j-1} (\xi -\phi_\tau  ))  }{|\nabla F (\phi_\tau )|}|J_\tau | ,$$
$$II= \frac{\hat{\psi} (\phi_{-\tau}  )  ( \hat{\eta}_1 (2^{j-1} (\xi -\phi_\tau   )) - \hat{\eta}_1 (2^{j-1} (\xi -\phi_{-\tau}   ))  )  }{|\nabla F (\phi_\tau  )|}|J_\tau  | ,$$
$$III=  (\hat{\psi} (\phi_{-\tau}  )  \hat{\eta}_1 (2^{j-1} (\xi -\phi_{-\tau}   ))   ) \left(\dfrac{1}{|\nabla F (\phi_\tau  )|}-\dfrac{1}{|\nabla F (\phi_{-\tau}  )| }\right)|J_\tau  | ,$$
and
$$IV= \frac{\hat{\psi} (\phi_{-\tau}  )  \hat{\eta}_1 (2^{j-1} (\xi -\phi_{-\tau}  ))    }{|\nabla F (\phi_{-\tau}  )| }( |J_\tau  |- |J_{-\tau}  | ).$$
First we estimate $I$. Since $\hat{\psi}$ is smooth and $m$ is bounded, by the mean value theorem 
$$
|\hat{\psi} (\phi_\tau (w))-\hat{\psi} (\phi_{-\tau} (w))| \leq C |\phi_\tau (w) - \phi_{-\tau} (w)| = C |\tau m(\tau) + \tau m(-\tau)| \leq C \tau \,, 
$$
and consequently the boundedness of $|\nabla F|^{-1}$ and a return to the original variables imply
\begin{multline}
 2^{(j-1)N}\int_{0}^{2c_1} \frac{1}{\tau} \int_{M_0} I d\sigma (w) d\tau \\
\begin{aligned} 
 &\leq C 2^{jN} \int_0^{2c_1} \int_{M_0} | \hat{\eta}_1 (2^{j-1} (\xi -\phi_\tau (w) ))|  \dfrac{|J_\tau (w)|}{|\nabla F (\phi_{\tau} (w))|} d\sigma (w) d\tau \\
&= C 2^{jN} \int_0^{2c_1}  \int_{M_\tau} \dfrac{| \hat{\eta}_1 (2^{j-1} (\xi -s))|}{|\nabla F (s)|} d\sigma (s) d\tau\\
&\leq C 2^{jN} \int_{\R^N} | \hat{\eta}_1 (2^{j-1} (\xi -z)) |dz = C 2^{jN} \int_{\R^N} | \hat{\eta}_1 (2^{j-1} z) |dz\\
&\leq C \,,
\end{aligned}
\end{multline}
where in the last step we used that $\hat{\eta}_1$ is a Schwartz function and in particular integrable.

To estimate $III$ we use that 
$|\nabla F|^{-1}$ is smooth, and therefore we get
$$\left| \dfrac{1}{|\nabla F (\phi_\tau (w))|}-\dfrac{1}{|\nabla F (\phi_{-\tau} (w))| }\right|\leq C \tau ,$$
and the estimate for $III$ follows as above.

Also, since $\phi$ is smooth and non-degenerate, then $J_\tau$ (proportional to $\nabla \phi$) is smooth and $|J_\tau (w)| \geq c > 0$ for any $|\tau| \leq 3c_1$. Therefore, 
$$||J_\tau (w)|- |J_{-\tau} (w)| |\leq C \tau |J_\tau (w)|$$
and the estimate for $IV$ follows as above.

Finally, for $II$, by the mean value theorem, we have
\begin{align*}
\hat{\eta}_1 (2^{j-1} (\xi -\phi_\tau (w) )) &- \hat{\eta}_1 (2^{j-1} (\xi -\phi_{-\tau} (w) )) = \int_{-1}^1 \frac{d}{dr}  \hat{\eta}_1 (2^{j-1} (\xi -\phi_{r\tau} (w) )) dr \\
&= \int_{-1}^1  \nabla \hat{\eta}_1 (2^{j-1} (\xi -\phi_{r\tau} (w) ))\nu(w) (\tau m(r\tau) + r\tau^2m'(r\tau)) dr \,.
\end{align*}
Then, since $|m|, |m'| \leq C$ and $|J_{\tau r}| \geq c > 0$ for $|\tau|\leq 2c_1$ and any $|r| \leq 1$, by following steps above
we obtain 
\begin{multline*}
2^{(j-1)N}\int_{0}^{2c_1} \frac{1}{\tau} \int_{M_0} IV d\sigma (w) d\tau  \\
\begin{aligned}
&\leq C 2^{jN} \int_{-1}^1 \int_{0}^{2c_1} \int_{M_0}  |\nabla \hat{\eta}_1 (2^{j-1} (\xi - \phi_{r\tau} (w) )) | d\sigma (w) d\tau  dr \\
&\leq C 2^{jN}  \int_{-1}^1 \int_{0}^{2c_1} \int_{M_0} |\nabla \hat{\eta}_1 (2^{j-1} (\xi -\phi_{r\tau} (w) ))| |J_{\tau r}(w)| d\sigma(w)d\tau dr \\
&= C 2^{jN}  \int_{-1}^1 \int_{0}^{2c_1} \int_{M_{\tau r}} |\nabla \hat{\eta}_1 (2^{j-1} (\xi - s ))|  d\sigma(s)d\tau dr \\
&\leq C2^{jN}  \int_{-1}^1 \int_{\R^N}  |\nabla \hat{\eta}_1 (2^{j-1} (\xi - z))|  dz dr \\
&\leq C2^{jN}  \int_{\R^N}  |\nabla \hat{\eta}_1 (2^{j-1} (\xi - z))|  dz
\end{aligned}
\end{multline*}
and the rest of the proof follows analogously as above with $\eta$ replaced by $\nabla \eta$.  

Overall, we showed
\begin{align} \label{RT2}
  \begin{aligned}
  \|G_2^j \ast f\|_{L^2 (\R^N )}^2
  &\leq C2^j \|f\|_{L^{ 2(k+2)/(k+4)}(\R^N )}^2 . 
  \end{aligned}
\end{align}
and the proof is finished. 
\end{proof}

\begin{thm}
  Assume that (A1), (A2), and (A3) hold and $k$ is as in Theorem \ref{STT}. Then, for any sufficiently small $c_1 > 0$ there is $C = C(p, q) > 0$ such that 
  \begin{equation} \label{eq:resolvent_estimate2}
    \|G_2* f \|_{L^q(\R^N)}
    \leq C \|f\|_{L^{p}(\R^N)} 
  \end{equation}
  provided $p$ and $q$ satisfy
\begin{equation}\label{terr}
\begin{aligned}
\frac{1}{p} - \frac{1}{q} &>  \frac{2}{2 + k} \,,\\
  \frac{1}{q} + \frac{(k+2)(N-k-1)}{kN} \frac{1}{p}  &> \frac{4N + 2kN - 4 - 6k - k^2}{2kN} \,,
  \\
  \frac{(k+2)(N - 1 - k)}{kN} \frac{1}{q} + \frac{1}{p}  &>  1-\frac{k}{2N} \,.
\end{aligned}
\end{equation}
If $p = q'$, then \eqref{terr} reduces to  $q > \frac{2(k + 2)}{k}$. 
\end{thm}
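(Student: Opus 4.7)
The proof proceeds by the dyadic decomposition $G_2 * f = \sum_{j \geq 0} G_2^j * f$ set up in \eqref{eq:def_G1j}, combined with Riesz--Thorin interpolation applied to a family of $L^p \to L^q$ bounds for each individual $G_2^j$. The bulk of the work is to find, for each pair $(p, q)$ satisfying \eqref{terr}, an exponent $\alpha(p, q) < 0$ such that
$$\|G_2^j * f\|_{L^q(\R^N)} \leq C 2^{j \alpha(p, q)} \|f\|_{L^p(\R^N)};$$
once this is in hand, the triangle inequality and $\sum_{j \geq 0} 2^{j \alpha(p, q)} < \infty$ give \eqref{eq:resolvent_estimate2}.

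The primary ingredients are three anchor estimates on each $G_2^j$: the trivial $L^1 \to L^\infty$ bound $\|G_2^j * f\|_{L^\infty} \leq C 2^{-jk/2} \|f\|_{L^1}$ coming from the pointwise control in \eqref{eq:def_G1j}, the Stein--Tomas type estimate $\|G_2^j * f\|_{L^2} \leq C 2^{j/2} \|f\|_{L^{2(k+2)/(k+4)}}$ supplied by Theorem \ref{thminvopbound}, and its dual $\|G_2^j * f\|_{L^{2(k+2)/k}} \leq C 2^{j/2} \|f\|_{L^2}$. In addition, the support of $G_2^j$ has volume of order $2^{jN}$, so $\|G_2^j\|_{L^r(\R^N)} \leq C 2^{j(N/r - k/2)}$ for every $r \in [1, \infty]$, and Young's inequality provides an $N$-dependent family $\|G_2^j * f\|_{L^q} \leq C 2^{j(N(1 + 1/q - 1/p) - k/2)} \|f\|_{L^p}$ whenever $1/p + 1/r = 1 + 1/q$.

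Applying Riesz--Thorin to these basepoints yields three families of interpolated bounds, whose summability over $j$ corresponds exactly to the three conditions in \eqref{terr}. The first condition $1/p - 1/q > 2/(k+2)$ arises from interpolating the $L^1 \to L^\infty$ anchor with either of the Stein--Tomas endpoints and requiring strict negativity of the resulting $\alpha$; this is the purely $k$-dependent Bourgain-type summability gain. The two $N$-dependent inequalities (2) and (3) come from pairing a Young-type bound, which contributes the $N$-dependence through $\alpha_0 = N(1 + 1/q_0 - 1/p_0) - k/2$, with the Stein--Tomas estimate, respectively its dual, and then optimizing the free parameter $r$ so that the interpolated exponent is minimized.

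The main technical obstacle is the parameter bookkeeping: for each $(p, q)$ in the admissible region one must exhibit an explicit convex combination of basepoints realizing $(1/p, 1/q)$, check that the corresponding $\alpha$ is strictly negative, and verify that each of the three listed inequalities is saturated by exactly one such configuration. Interpolation between Stein--Tomas and its dual alone is insufficient because along the segment joining them the slope in the $(1/p, 1/q)$ plane is fixed at $1/(k+2)$ and the exponent is stuck at $1/2 > 0$, so nontrivial gain requires mixing in the $L^1 \to L^\infty$ anchor or a Young-type estimate, which is precisely what yields the three distinct conditions. Finally, when $p = q'$ so that $1/p + 1/q = 1$, substituting into condition (1) immediately gives $q > 2(k+2)/k$, and a direct algebraic check using $1/p = 1 - 1/q$ shows that under this constraint conditions (2) and (3) are implied by the same inequality, which establishes the stated reduction.
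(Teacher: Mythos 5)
Your scheme is exactly the paper's: decompose $G_2=\sum_j G_2^j$, use the Young-type bounds $\|G_2^j\|_{L^r}\le C2^{j(N/r-k/2)}$ (of which the $L^1\to L^\infty$ anchor is the extreme case), the Stein--Tomas estimate of Theorem \ref{thminvopbound} and its dual, interpolate by Riesz--Thorin to get a strictly negative exponent $\alpha(p,q)$, and sum the geometric series. The only organizational difference is that you dualize each piece $G_2^j$, whereas the paper first builds the region $\Gamma$ by interpolating Stein--Tomas against the Young family, then dualizes the summed operator and takes the convex hull of $\Gamma\cup\Gamma'$; since the family of piecewise anchors is symmetric under $(1/p,1/q)\mapsto(1-1/q,1-1/p)$ with unchanged exponents, the two bookkeepings produce the same pentagon $P,Q,R,S,S'$, so this difference is cosmetic. (A small slip: along the segment joining Stein--Tomas and its dual it is the gap $1/p-1/q$ that is frozen at $1/(k+2)$, not the slope, which equals $1$; your conclusion that this segment alone gives no gain is nevertheless right.)

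The real problem is that the one explicit verification you claim to have done cannot succeed for \eqref{terr} as printed. Carrying out the "parameter bookkeeping" (which you defer) yields the pentagon whose slanted sides are the line through $R=(1,\tfrac{k}{2N})$ and $S$ and its reflection through $P$ and $S'$; the admissible half-plane for the first of these is the one containing the trivially admissible point $(1/p,1/q)=(1,0)$ (where $\|G_2*f\|_{L^\infty}\le\|G_2\|_{L^\infty}\|f\|_{L^1}$), and at that point the second inequality of \eqref{terr} fails, its two sides differing by $-k/(2N)$. In other words, the second condition should be read with "$<$"; as printed it is oriented away from the region your interpolation (and the paper's) actually produces. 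Concretely, for $k=N-1$ its coefficient $(k+2)(N-k-1)/(kN)$ vanishes and it reads $1/q>(N-1)/(2N)$, which on the duality line $p=q'$ is incompatible with $1/p-1/q>2/(k+2)$ for every admissible $k\le N-1$; so your assertion that "a direct algebraic check shows that conditions (2) and (3) are implied" when $p=q'$ is false for the statement as written -- the system is empty there. This is an inconsistency inherited from the statement itself (the paper's own pentagon computation gives the reversed inequality, and with that reading your $p=q'$ reduction and the whole argument go through verbatim), but a complete proof has to carry the bookkeeping far enough to produce the correct orientation of the three half-planes rather than assert that it matches \eqref{terr}; as it stands, that step of your proposal would fail.
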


\begin{proof}
Using \eqref{eq:def_G1j}, for any  $\tilde{p}, \tilde{q}, r \in [1,\infty ]$ such that $1+\frac{1}{\tilde q} = \frac{1}{r}+\frac{1}{\tilde p}$, Young inequality gives
\begin{align}  \label{RT1}
  \begin{aligned}
  \|G^j_2 \ast f\|_{L^{\tilde q}(\R^N)}
  &\leq \|G_2^j\|_{L^r(\R^N )} \|f\|_{L^{\tilde p}(\R^N)} \\ 
  &\leq C 2^{j\left(- \frac{k}{2} + \frac{N}{r}\right)}\|f\|_{L^{\tilde p}(\R^N)} \\
  &= C 2^{j\left(-\frac{k}{2} + N +\frac{N}{\tilde q} - \frac{N}{\tilde p}\right)}\|f\|_{L^{\tilde p}(\R^N)} \,.
  \end{aligned}
\end{align}
Fix $q\geq 2$, $1\leq p\leq \frac{2(k+2)}{k+4}$, and $\tilde{p} \leq p$, $\tilde{q} \geq q$, and $\theta \in [0, 1]$ such that 
\begin{equation}
\frac{1}{p}=\frac{\theta}{\tilde p}+\frac	{(1-\theta)(k+4)}{2(k+2)} \,, \qquad \frac{1}{q}=\frac{\theta}{\tilde q}+\frac{1-\theta}{2} \,.
\end{equation}
By substituting $\tilde{p} = 1$ and $\tilde{q} = \infty$,  we observe that such choice is possible if and only if $1\geq \theta\geq \max\{1-\frac{2}{q},\frac{2(k+2)-p(k+4)}{k p}\}$. Then, 
from the Riesz-Thorin theorem, \eqref{eq:resolvent_estimate} and \eqref{RT1}, it follows that
$$
  \|G_2^j \ast f\|_{L^q(\R^N)}
  \leq C 2^{j\left(\frac{1}{2}+\theta N\left(1 - \frac{k+1}{2N}+\frac{1}{\tilde q} - \frac{1}{\tilde p}\right)\right)}\|f\|_{L^p(\R^N)}
  \qquad (j\in\Z), 
$$
which after substitution for $\tilde{p}$ and $\tilde{q}$ becomes
$$
  \|G_2^j \ast f\|_{L^q(\R^N)}
  \leq C 2^{j \left( \frac{2N+2+k}{2(k+2)} + \frac{N}{q} - \frac{N}{p} +\theta N \left(\frac{3}{2} - \frac{k+1}{2N} - \frac{k+4}{2(k+2)} \right)\right)}\|f\|_{L^p(\R^N)}
  \qquad (j\in\Z) \,.
$$
Denote 
\begin{equation}
A = \frac{2N+2+k}{2N(k+2)} \,, \qquad B = \left(\frac{3}{2} - \frac{k+1}{2N} - \frac{k+4}{2(k+2)} \right) \,.
\end{equation}
Since the factor of $\theta$ is positive, to make the exponent negative, we need to choose $\theta$ as small as possible and such $\theta$ exists if and only if  
\begin{equation}\label{ugl}
A + \frac{1}{q} - \frac{1}{p} + \left(1-\frac{2}{q}\right)B < 0, \, \textrm{and }
A + \frac{1}{q} - \frac{1}{p} + \left( \frac{2(k+2)}{kp} - \frac{k+4}{k}\right)B < 0\,.
\end{equation}
Then,  \eqref{ugl} guarantee that 
\begin{equation}\label{fle}
 \|G_2\ast f\|_{L^q(\R^N)}  \leq C \|f\|_{L^p(\R^N)} \,.
\end{equation}
To visualize \eqref{ugl} we substitute $x = 1/p$ and $y = 1/q$ and get
\begin{equation}
A + y - x + \left(1- 2y \right)B < 0, \, \textrm{and }
A + y - x + \left( \frac{2(k+2)}{k}x - \frac{k+4}{k}\right)B < 0.
\end{equation}
Therefore the admissible region $\Gamma$ in $xy$-plane lies between two lines and inside the square $[0, 1]\times [0, 1]$, see Figure \ref{fig2} for sample regions.

\begin{figure}
\centering 
\begin{subfigure}{.35\textwidth}
\centering
\includegraphics[width=5cm]{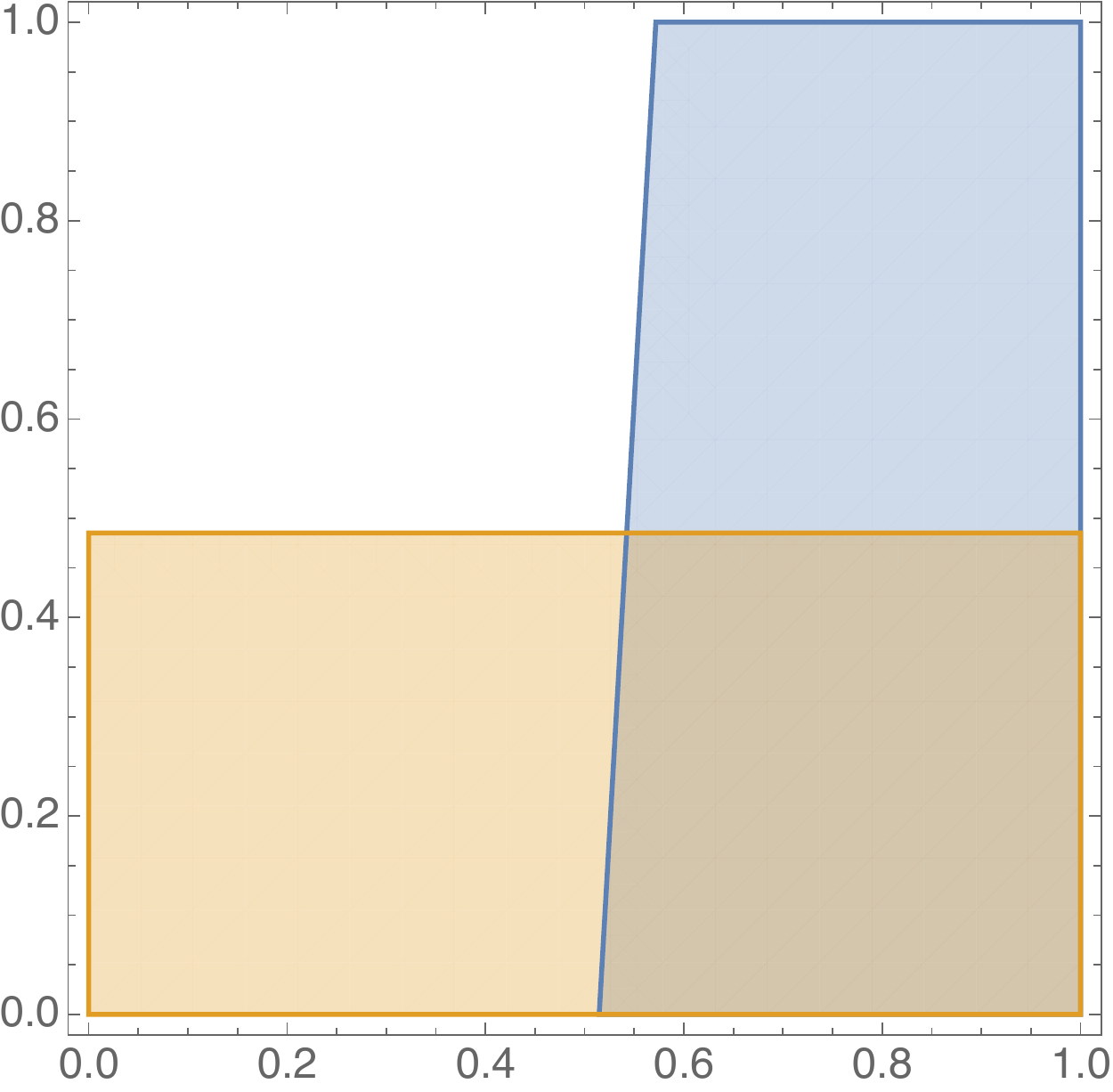}
\caption{$N = 34$, $k = 33$}
\end{subfigure}
\qquad\qquad 
\begin{subfigure}{.35\textwidth}
\centering
\includegraphics[width=5cm]{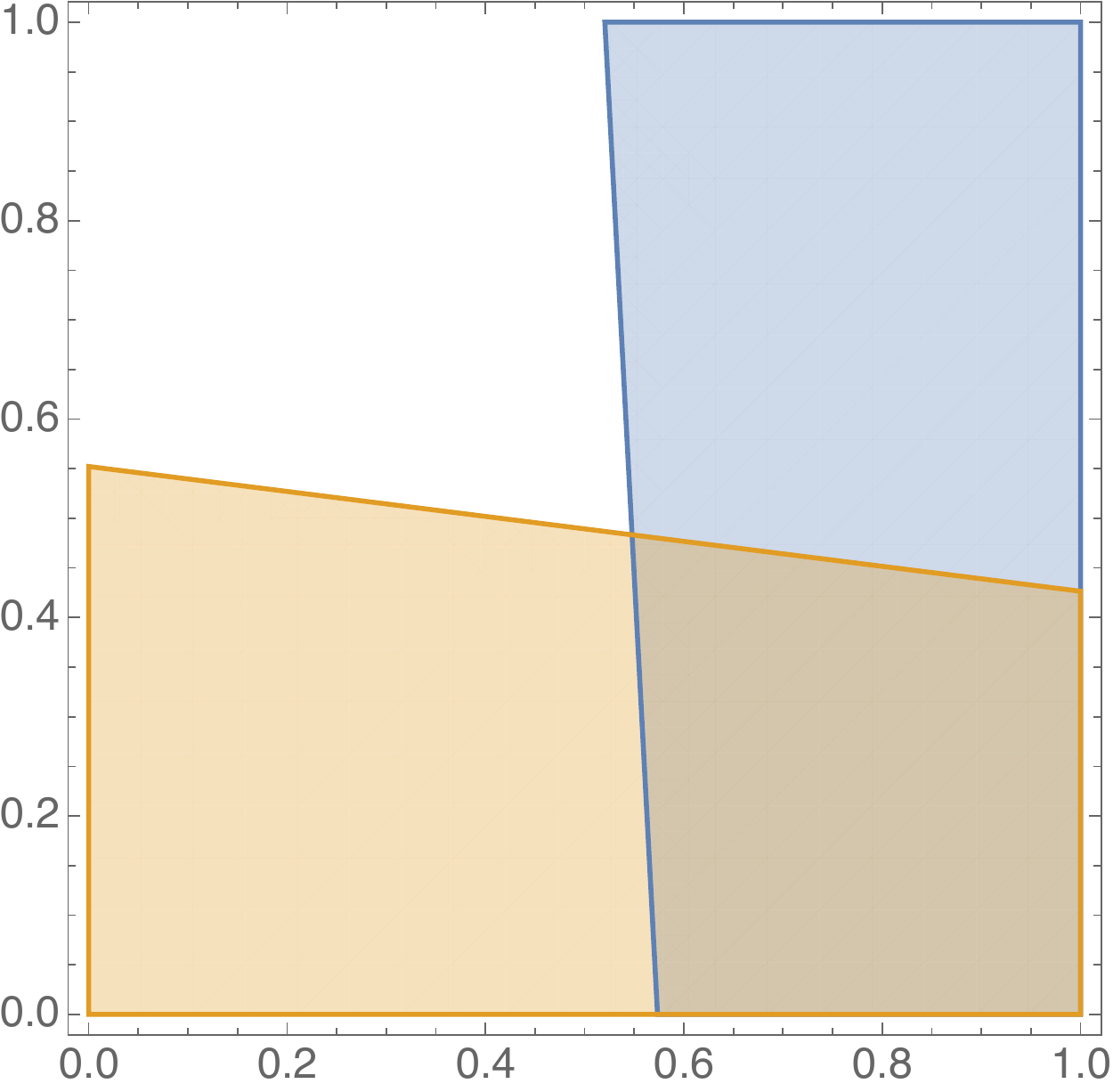}
\caption{$N = 34$, $k = 29$}
\end{subfigure}
\newline
\newline
\noindent
\centering
\begin{subfigure}{.35\textwidth}
\centering
\includegraphics[width=5cm]{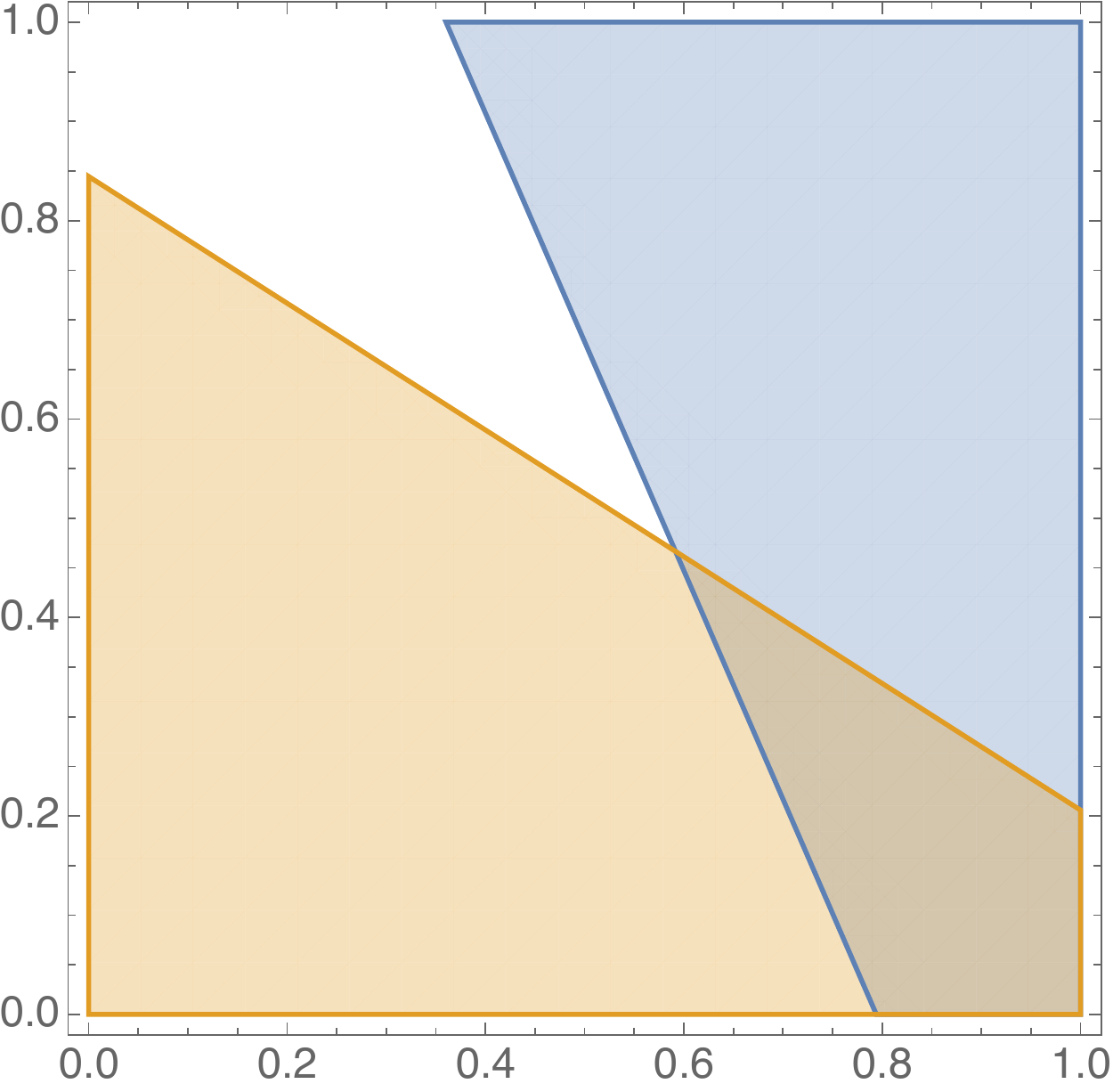}
\caption{$N = 34$, $k = 14$}
\end{subfigure}
\qquad \qquad 
\begin{subfigure}{.35\textwidth}
\centering
\includegraphics[width=5cm]{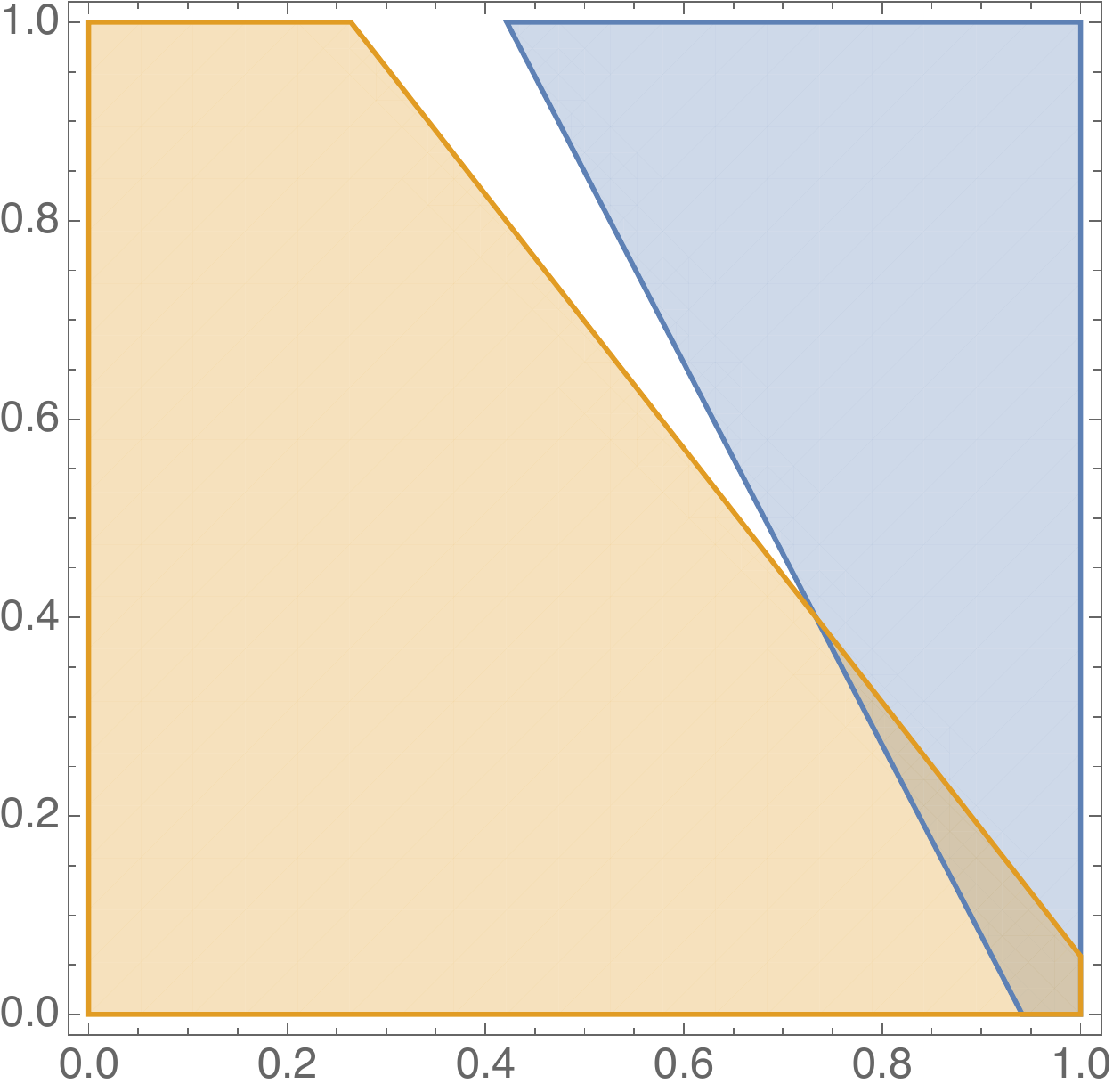}
\caption{$N = 34$, $k = 4$}
\end{subfigure}
\hphantom{1cm}
\caption{The region $\Gamma$ in dimension $N=34$ with selected values of $k$}\label{fig2}
\end{figure}

Since $\hat{G}_2$ is real, the convolution with $G_2$ is self-adjoint operator, and we obtain an estimate for the adjoint 
\begin{equation}
\|G_2\ast f\|_{L^{p'}(\R^N)}  \leq C \|f\|_{L^{q'}(\R^N)}
\end{equation}
whenever $p \leq \frac{2(k + 2)}{k+4}$ and $q \geq 2$ satisfy \eqref{ugl}. Noticing that if $(p, q)$ is admissible, then $(q', p')$ is admissible as well. 
Equivalently, if $(x, y)$ is admissible, then $(1 - y, 1 - x)$ is admissible as well. Thus, since $\Gamma$ is an admissible region, then the reflection of $\Gamma$, denoted $\Gamma'$,
with respect to the line $x + y = 1$ that lies inside the unit square is also an admissible region.  By the Riesz-Thorin theorem, we can interpolate, which in the $xy$-plane
means that the convex hull of $\Gamma \cup \Gamma'$ is also an admissible region. 

\begin{figure}
\centering
\includegraphics[width=5cm]{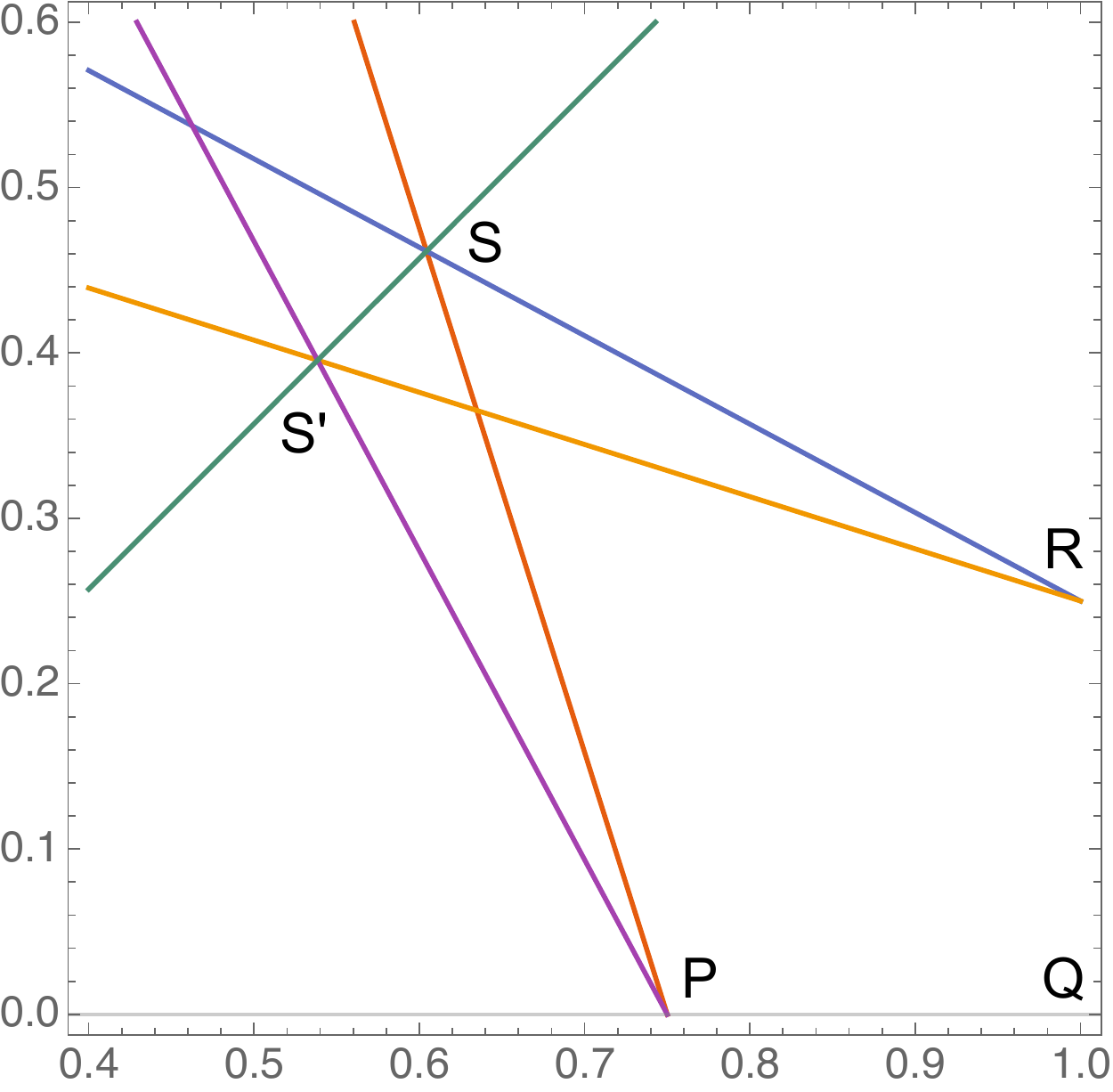}
\caption{Admissible region for $N = 24$ and $k = 12$}
\label{Fig3}
\end{figure}
Let us provide quantitative calculations. Using that $A + B = 1 - \frac{k}{2N}$ and calculating the intersections of lines, we obtain that he region $\Gamma$ is the quadrilateral (see Figure \ref{Fig3}) with the vertices 
\begin{equation}
P= \left(1 - \frac{k}{2N}, 0\right), \, Q = (1, 0), \,   R = \left(1, \frac{k}{2N}\right), \, S =  \left(\frac{4 + 6k + k^2}{4+6k + 2k^2}, \frac{k}{2(k + 1)}\right) \,.
\end{equation}
Perhaps surprisingly $S$ is independent of $N$. Since the $P$ and $R$ are symmetric with respect to the line $x + y = 1$, then $\Gamma'$
is a quadrilateral bounded by the vertices  $P, Q, R$ and 
\begin{equation}
S' = \left( \frac{k+2}{2(k+1)}, \frac{k^2}{4+6k + 2k^2}\right) \,.
\end{equation}
Finally, the convex hull of $\Gamma \cup \Gamma'$ is a pentagon with the vertices $P, Q, R, S, S'$. The sides of this pentagon lie on the lines 
\begin{gather}
y = 0,  \qquad x = 1, \qquad A + y - x + \left( \frac{2(k+2)}{k}x - \frac{k+4}{k}\right)B = 0\\
A  + y - x + \left( 1 - \frac{2(k+2)}{k}y \right)B = 0 , \qquad x - y =  \frac{2}{2 + k} \,.
\end{gather}
Transforming back to $p$ and $q$ and omitting trivial conditions $p \geq 1$ and $q \leq \infty$, we obtain that 
\begin{equation}
\|G_2\ast f\|_{L^q(\R^N)}  \leq C \|f\|_{L^p(\R^N)}
\end{equation}
if \eqref{terr} is valid. 

Finally, $p = q'$, is equivalent to $x = 1- y$, which is the axis of symmetry of $\Gamma \cup \Gamma'$. Thus, any point on axis of symmetry that lies inside the square $[0, 1]\times [0, 1]$ 
satisfying $x - y > \frac{2}{2 + k}$ belongs to the convex hull of $\Gamma \cup \Gamma'$, and the last assertion follows. 

\end{proof}

Finally, we use Theorem \ref{thminvopbound} to prove the existence of solution to the nonlinear Helmholtz equation using the dual variational method of Evequoz and Weth
\cite{EW}.
The main idea of the method is to rewrite \eqref{maineq} as $u = \mathfrak{R}(|u|^{p-2}u)$ and after the substitution $v = |u|^{p-2}u$, we are 
looking for a function $v \in L^{p^\prime}(\R^N)$ satisfying 
\begin{equation}
\label{eqdual}
\mathfrak{R}(v)= |v|^{p^\prime -2}v,\qquad \textrm{in}\ \R^N ,
\end{equation}
for $\frac{2(k+2)}{k} < p<\dfrac{2N}{(N-4)_+}$ and $\frac{1}{p} + \frac{1}{p'} = 1$. Equation \eqref{eqdual} admits a variational structure and solutions can be found as critical points of the functional $J \in C^1 (L^{p^\prime}(\R^N),\R)$ defined by
$$J(v)= \dfrac{1}{p^\prime} \int_{\R^N} |v|^{p^\prime} dx - \frac{1}{2} \int_{\R^N} \bar{v} \mathfrak{R} v dx \,,$$
 where $\bar{v}$ is the complex conjugate of $v$. Note that $J$ is real valued since $\hat{G}$ is real, and therefore by the Plancherel theorem 
 \begin{equation}
 \int_{\R^N} \bar{v} \mathfrak{R} v dx =  \int_{\R^N} \bar{\hat{v}} \hat{G} \hat{v} d\xi = \int_{\R^N}  \hat{G} |\hat{v}|^2 d\xi \,.
 \end{equation}

\begin{proof}[Proof of Theorem \ref{mainthm}]
Since $p > 2$, then $p' < 2$ and by a standard scaling argument (see e.g. \cite[Lemma 4.2]{EW}), one can show that $J$ has the mountain pass geometry, and therefore 
$$c=\inf_{\gamma \in P} \max_{t\in [0,1]} J(\gamma (t)) >0,$$
where $P= \{\gamma \in C([0,1],L^{p^\prime}(\R^N)):\ \gamma (0)=0\ \textrm{ and }  J(\gamma (1))<0 \}$
Moreover, there exists a Palais-Smale sequence $(v_n) \subset L^{p^\prime} (\R^N)$, that is, $(v_n)$ satisfies 
$\sup_n |J(v_n)| < \infty$ and $J'(v_n) \to 0$ in $L^{p'}(\R^N)$. Since  $p' < 2$ as in \cite[Lemma 4.2]{EW}
we have that $(v_n)$ is bounded in $L^{p^\prime} (\R^N)$. In addition, since $\hat{G}$ is real, the convolution is a self-adjoint operator, and therefore
\begin{equation}\label{lwb}
\left(\dfrac{1}{p^\prime} - \dfrac{1}{2}\right) \int_{\R^N} \bar{v}_n (G\ast v_n) dx = J(v_n) - \frac{1}{p'}J'(v_n)[v_n] \to c \qquad \textrm{as}\quad n \to \infty \,. 
\end{equation}

Next, we show a non-vanishing property. More precisely, we prove that there exist $R>0$, $\zeta >0$, and a sequence $(x_n)_n \subset \R^N$ such that, up to a subsequence
\begin{equation}
\label{contranon}
\int_{B_R (x_n)} |v_n|^{p^\prime} dx \geq \zeta\quad  \textrm{for all} \quad n.
\end{equation}
First, notice that it is sufficient to prove \eqref{contranon} for sequence $v_n$ belonging to $\mathcal{S}(\R^N)$ the class of Schwartz function. 
Otherwise, we replace $v_n$ by $\tilde{v}_n \in \mathcal{S}(\R^N)$ with $\|v_n - \tilde{v}_n\|_{L^{p'}} \leq \frac{1}{n}$. Arguing as in \cite[proof of Theorem 3.1]{EW}, we obtain that \eqref{lwb}
holds true with $v_n$ and $c$ replaced respectively by $\tilde{v}_n$ and $c/2$, and \eqref{contranon} holds for $v_n$ if we prove it for $\tilde{v}_n$.
We proceed by contradiction and assume that
\begin{equation}
\label{contra}
\lim_{n\rightarrow \infty} \left(\sup_{y\in \R^N} \int_{B_\rho (y)} |v_n|^{p^\prime} dx\right)=0\ \textrm{for all} \quad  \rho>0.
\end{equation}
The same decomposition as in the proof of Theorem \ref{thminvopbound} yields 
\begin{equation}
\label{contrae1}
\int_{\R^N} \bar{v}_n \mathfrak{R} v_n= \int_{\R^N} \bar{v}_n G_1 \ast v_n dx + \int_{\R^N} \bar{v}_n G_2 \ast v_ndx.
\end{equation}
Using the estimate \eqref{estG2} for $G_1$, we proceed exactly as in  \cite[Lemma $3.2$]{EW}, just replacing $N - 2$ by $N-4$ to show that 
\begin{equation}
\label{contrae2} \int_{\R^N} \bar{v}_n [G_1 \ast v_n]dx \rightarrow 0\ as\ n\rightarrow \infty.
\end{equation}
For fixed $R = 2^M > 0$ specified below, denote $M_R= \R^N \backslash B_R$  and decompose 
$$ \int_{\R^N} \bar{v}_n [G_2 \ast v_n] dx= \int_{\R^N} \bar{v}_n [(1_{B_R}G_2) \ast v_n] dx + \int_{\R^N} \bar{v}_n [(1_{M_R}G_2) \ast v_n] dx.$$
By the second half of \cite[Proof of Lemma 3.4]{EW} which only uses the boundedness of $G_2$, we obtain
\begin{equation}
\label{contrae3}
\lim_{n\rightarrow \infty}  \int_{\R^N} \bar{v}_n [(1_{B_R}G_2) \ast v_n] dx=0 \qquad \textrm{ for\ any} \quad \ R>0.
\end{equation}
To estimate $ \int_{\R^N} \bar{v}_n [(1_{M_R}G_2) \ast v_n] dx$,  denote $P_R= 1_{M_R} G_2$, $R\geq 4$ and for $\eta_j$ as in the proof of Theorem \ref{thminvopbound}, 
define
$$
 P_j(x)=P_R (x) \eta_j (x),\ j\in \N\,, \qquad \textrm{and therefore }
P_R=\sum_{j=[\log_2 R]}^\infty P^j.$$
Notice that $P^j = 0$ for $j \leq M - 1$ and $P^j = G^j_2$ for $j \geq M+1$, where $G^j_2 = G_2\eta_j$ was defined in the proof of Theorem \ref{thminvopbound}.
Since  $P^M$ can be treated as $1_{M_R}G_2$ above, we only need to estimate $P^j = G^j_2$ for $j \geq M + 1$. 
By \eqref{gutiee1}, 
\begin{equation}
\label{decPj}
\|P^j\|_{L^\infty (\R^N)} \leq C 2^{-jk/2},\qquad  \textrm{for any } \quad j\geq M + 1.
\end{equation}
Fix any $j > M$ and denote $d = \frac{2(k+2)}{k+4}$. Then, from \eqref{eq:resolvent_estimate} follows
$$\|P^j \ast v_n \|_{L^2 (\R^N)} \leq 2^{j/2} \|v_n\|_{L^{d} (\R^N)}, $$
and by duality (convolution with kernel that has real fourier transform is self-adjoint),
$$\|P^j \ast v_n \|_{L^{d'} (\R^N)} \leq 2^{j/2} \|v_n\|_{L^{2} (\R^N)}.$$
Since 
\begin{equation}\label{din}
\frac{1}{s} = \frac{1 - \theta}{b} + \frac{\theta}{c} \qquad \textrm{implies }  \frac{1}{s'} = \frac{1 - \theta}{c'} + \frac{\theta}{b'}
\end{equation}
if $\theta = \frac{1}{2}$, we obtain that for $(b, c) = (d, 2)$ that $s = \frac{2 (k+2)}{k+1}$. Consequently, the Riesz-Thorin theorem yields
$$\|P^j \ast v_n\|_{L^s (\R^N)} \leq C 2^{j/2} \|v_n\|_{L^{s^\prime}(\R^N)}.$$
On the other hand, from Young's inequality and \eqref{decPj} follows
$$\|P^j \ast v_n\|_{L^\infty (\R^N)} \leq C 2^{-jk/2}\|v_n\|_{L^1 (\R^N)}.$$
Since 
\begin{equation}
\frac{1}{p} = \frac{1 - \theta}{s} + \frac{\theta}{\infty} \qquad \textrm{implies } \frac{1}{p'} = \frac{1 - \theta}{s'} + \frac{\theta}{1}
\end{equation}
we obtain from the Riesz-Thorin theorem that, for any  $p\geq s$ and $j > M$,
$$\|P^j \ast v_n\|_{L^p (\R^N)} \leq C 2^{j (\frac{k+2}{p}  - \frac{k}{2}) } \|v_n\|_{L^{p^\prime}(\R^N)}.$$
Notice that, by assumption, $\frac{k+2}{p}  - \frac{k}{2}<0$, so a summation with respect to $j > M$ implies
\begin{align}
\|(1_{M_R} G_2) \ast v_n\|_{L^p (\R^N)} &\leq C \|v_n\|_{L^{p^\prime}(\R^N)} \sum_{j=M + 1}^\infty 2^{j (\frac{k+2}{p}  - \frac{k}{2}) }\\
 &\leq C2^{ M(\frac{k+2}{p}  - \frac{k}{2}) }   \|v_n\|_{L^{p^\prime}(\R^N)},
\end{align}
and consequently 
\begin{align}
\label{contrae4}
\sup_{n\in \N} \left|\int_{\R^N} \bar{v}_n [(1_{M_R} G_2) \ast v_n] dx\right| &\leq C2^{M(\frac{(k+2)}{p}  - \frac{k}{2}) }\sup_{n\in \N} \|v_n\|_{L^{p^\prime}(\R^N)}^2
\rightarrow 0
\end{align}
 as $R\rightarrow \infty$.
Thus, we have proved that if \eqref{contra} holds then, using \eqref{contrae1}, \eqref{contrae2}, \eqref{contrae3}, and \eqref{contrae4},
$$\int_{\R^N} v_n \mathfrak{R} v_n dx \rightarrow 0,\ as\ n\rightarrow 0. $$
This contradicts \eqref{lwb}, and therefore \eqref{contranon} holds. Thus, denoting $u_n (x)=v_n (x-x_n) $, $(u_n)$ is a bounded Palais-Smale sequence of $J$ which weakly converges to some 
$u\in L^{p^\prime} (\R^N)$. By proceeding as for instance in \cite[Theorem 4.1]{MR3977892}, for any $R > 0$ any any smooth $\varphi$ compactly supported in $B_R$ one has
\begin{multline}
\left| \int_{\R^N} (|u_n|^{p'-2}u_n - |u_m|^{p'-2}u_m)\varphi dx\right| \\
\begin{aligned}
&= \left|J'(u_n)[\varphi] - J'(m_n)[\varphi] + \int_{B_R} \mathfrak{R} (u_n - u_m)\varphi \right|\\
&\leq (\|J'(u_n)\| + \|J'(u_m)\|) \|\varphi\|_{L^{p'}(\R^N)} + \|1_{B_R} \mathfrak{R} (u_n - u_m)\|_{L^p(\R^N)} \|\varphi\|_{L^{p'}(\R^N)} \,.
\end{aligned}
\end{multline}
Since $p < \frac{2N}{(N-4)_+}$, $W^{4, p'}$  is compactly embedded in $L^p$, and by local regularity results (see \cite[Theorem $14.1'$]{ADN_estimates_I}) one has 
\begin{equation}
 \|1_{B_R} \mathfrak{R} w\|_{L^{p}(\R^N)} \leq C  \|1_{B_R} \mathfrak{R} w\|_{W^{4, p'}(\R^N)}  \leq C_R \|w \|_{L^{p'}(B_R)} .
\end{equation}
Therefore, by compactness $(\mathfrak{R} (u_n - u_m))_{m, n}$ converges strongly to zero as $m, n \to \infty$ in $L^{p}$. 
In addition, both $\|J'(u_n)\|$ and $\|J'(u_m)\|$ converge to zero as $n, m \to 0$. Thus $|u_n|^{p' - 2}u_n$ strongly converges to $|u|^{p' - 2}u$ locally in $L^p$. 
By \eqref{contranon}, we have
\begin{equation}
\zeta \leq \int_{B_R(x_n)} |v_n|^{p'} dx = \int_{B_R} |u_n|^{p'} dx =  \int_{B_R} |u_n|^{p'-2}u_n \overline{u}_n dx \to  \int_{B_R} |u|^{p'} dx
\end{equation}
as $n \to \infty$, were we used that $|u_n|^{p' - 2}u_n \to |u|^{p' - 2}u$ strongly in $L^p(B_R)$ and $u_n \rightharpoonup u$ weakly in $L^{p'}(\R^N)$. By standard arguments
(see for example \cite[Theorem 4.1]{MR3977892}), we obtain that $u\in L^{p'}(\R^N)$ is a non-trivial critical point of $J$. Also, by 
\cite[Theorem $14.1'$]{ADN_estimates_I}, see for example  \cite[Proposition 5.1]{MR3977892}), we obtain that $u \in W^{4, p}_{\textrm{loc}}(\R^N)$, and therefore by  \eqref{contranon}, $u$ is a nontrivial (strong) solution of \eqref{maineq}. 
 
 To obtain 
 the global estimates, we proceed as in  \cite[Proposition 5.2 and Theorem 5.1]{MR3977892}. By a bootstrap argument and using again that $p < \frac{2N}{(N-4)_+}$, we 
 obtain that $u$ is bounded (by a function of $\|u\|_{L^{p'}(\R^N)} < \infty$, which is bounded), for details see \cite[proof of Theorem 5.1]{MR3977892}, and consequently by interpolation, 
 $u \in L^q(\R^N)$ for any $q \in [p, \infty]$.  Finally, 
by using \cite[Corollary on page 559]{MR1955094}, we obtain that $u \in W^{4, q}(\R^N)$ for any $q \in [p, \infty)$ and the H\" older regularity follows from 
embeddings of Sobolev into H\" older spaces. 

This concludes the proof.
\end{proof}


\begin{thebibliography}{10}

\bibitem{ADN_estimates_I}
S.~Agmon, A.~Douglis, and L.~Nirenberg.
\newblock Estimates near the boundary for solutions of elliptic partial
  differential equations satisfying general boundary conditions. {I}.
\newblock {\em Comm. Pure Appl. Math.}, 12:623--727, 1959.

\bibitem{MR2831841}
J.-G. Bak and A.~Seeger.
\newblock Extensions of the {S}tein-{T}omas theorem.
\newblock {\em Math. Res. Lett.}, 18(4):767--781, 2011.

\bibitem{MR1745182}
M.~Ben-Artzi, H.~Koch, and J.-C Saut.
\newblock Dispersion estimates for fourth order {S}chr\"odinger equations.
\newblock {\em C. R. Acad. Sci. Paris S\'er. I Math.}, 330(2):87--92, 2000.

\bibitem{MR3855391}
D.~Bonheure, J.-B. Casteras, E.~M. dos Santos, and R.~Nascimento.
\newblock Orbitally stable standing waves of a mixed dispersion nonlinear
  {S}chr\"{o}dinger equation.
\newblock {\em SIAM J. Math. Anal.}, 50(5):5027--5071, 2018.

\bibitem{MR3976588}
D.~Bonheure, J.-B. Casteras, T.~Gou, and L.~Jeanjean.
\newblock Normalized solutions to the mixed dispersion nonlinear
  {S}chr\"{o}dinger equation in the mass critical and supercritical regime.
\newblock {\em Trans. Amer. Math. Soc.}, 372(3):2167--2212, 2019.

\bibitem{MR4001029}
D.~Bonheure, J.-B. Casteras, T.~Gou, and L.~Jeanjean.
\newblock Strong instability of ground states to a fourth order
  {S}chr\"{o}dinger equation.
\newblock {\em Int. Math. Res. Not. IMRN}, (17):5299--5315, 2019.

\bibitem{MR3977892}
D.~Bonheure, J.-B. Casteras, and R.~Mandel.
\newblock On a fourth-order nonlinear {H}elmholtz equation.
\newblock {\em J. Lond. Math. Soc. (2)}, 99(3):831--852, 2019.

\bibitem{MR3494890}
D.~Bonheure and R.~Nascimento.
\newblock Waveguide solutions for a nonlinear {S}chr\"{o}dinger equation with
  mixed dispersion.
\newblock In {\em Contributions to nonlinear elliptic equations and systems},
  volume~86 of {\em Progr. Nonlinear Differential Equations Appl.}, pages
  31--53. Birkh\"{a}user/Springer, Cham, 2015.

\bibitem{BL}
T.~Boulenger and E.~Lenzmann.
\newblock Blowup for biharmonic {NLS}.
\newblock {\em Ann. Sci. \'Ec. Norm. Sup\'er. (4)}, 50(3):503--544, 2017.

\bibitem{BLSS}
L.~Bugiera, E.~Lenzmann, A.~Schikorra, and J.~Sok.
\newblock On symmetry of traveling solitary waves for dispersion generalized
  {NLS}.
\newblock {\em Nonlinearity}, 33(6):2797--2819, 2020.

\bibitem{C}
J.-B. Casteras.
\newblock Travelling wave solutions for a fourth order schr\" odinger equation
  with mixed dispersion.
\newblock {\em Preprint available on https
  ://sites.google.com/view/jeanbaptistecasteras/accueil}, 2020.

\bibitem{Ev}
G.~Ev\'equoz.
\newblock Existence and asymptotic behavior of standing waves of the nonlinear
  {H}elmholtz equation in the plane.
\newblock {\em Analysis (Berlin)}, 37(2):55--68, 2017.

\bibitem{EW}
G.~Evequoz and T.~Weth.
\newblock Dual variational methods and nonvanishing for the nonlinear
  {H}elmholtz equation.
\newblock {\em Adv. Math.}, 280:690--728, 2015.

\bibitem{FJMM}
A.~J. Fern\'{a}ndez, L.~Jeanjean, R.~Mandel, and M.~Maris.
\newblock Some non-homogeneous gagliardo-nirenberg inequalities and application
  to a biharmonic non-linear schr\" odinger equation.
\newblock {\em Preprint arXiv:2010.01448}, 2020.

\bibitem{MR1898529}
G.~Fibich, B.~Ilan, and G.~Papanicolaou.
\newblock Self-focusing with fourth-order dispersion.
\newblock {\em SIAM J. Appl. Math.}, 62(4):1437--1462 (electronic), 2002.

\bibitem{Grafakos}
L.~Grafakos.
\newblock {\em Classical {F}ourier analysis}, volume 249 of {\em Graduate Texts
  in Mathematics}.
\newblock Springer, New York, third edition, 2014.

\bibitem{Gu}
S.~Guti\'errez.
\newblock Non trivial {$L^q$} solutions to the {G}inzburg-{L}andau equation.
\newblock {\em Math. Ann.}, 328(1-2):1--25, 2004.

\bibitem{MR3556359}
K.~Hambrook and I.~{\L}aba.
\newblock Sharpness of the {M}ockenhaupt-{M}itsis-{B}ak-{S}eeger restriction
  theorem in higher dimensions.
\newblock {\em Bull. Lond. Math. Soc.}, 48(5):757--770, 2016.

\bibitem{Herz}
C.~S. Herz.
\newblock Fourier transforms related to convex sets.
\newblock {\em Ann. of Math. (2)}, 75:81--92, 1962.

\bibitem{Him}
D.~Himmelsbach.
\newblock Blowup, solitary waves and scattering for the fractional nonlinear
  schr\" odinger equation.
\newblock {\em PhD Thesis University of Basel}, 2017.

\bibitem{MR1779828}
V.I. Karpman and A.G. Shagalov.
\newblock Stability of solitons described by nonlinear {S}chr\"odinger-type
  equations with higher-order dispersion.
\newblock {\em Phys. D}, 144(1-2):194--210, 2000.

\bibitem{MR155146}
W.~Littman.
\newblock Fourier transforms of surface-carried measures and differentiability
  of surface averages.
\newblock {\em Bull. Amer. Math. Soc.}, 69:766--770, 1963.

\bibitem{MR3949725}
R.~Mandel.
\newblock The limiting absorption principle for periodic differential operators
  and applications to nonlinear {H}elmholtz equations.
\newblock {\em Comm. Math. Phys.}, 368(2):799--842, 2019.

\bibitem{MMP}
R.~Mandel, E.~Montefusco, and B.~Pellacci.
\newblock Oscillating solutions for nonlinear {H}elmholtz equations.
\newblock {\em Z. Angew. Math. Phys.}, 68(6):68:121, 2017.

\bibitem{miao}
C.~Miao, G.~Xu, and L.~Zhao.
\newblock Global well-posedness and scattering for the defocusing
  energy-critical nonlinear {S}chr\"odinger equations of fourth order in
  dimensions {$d\geqslant9$}.
\newblock {\em J. Differential Equations}, 251(12):3381--3402, 2011.

\bibitem{MR1882456}
T.~Mitsis.
\newblock A {S}tein-{T}omas restriction theorem for general measures.
\newblock {\em Publ. Math. Debrecen}, 60(1-2):89--99, 2002.

\bibitem{MR1955094}
Y.~Miyazaki.
\newblock The {$L^p$} resolvents of elliptic operators with uniformly
  continuous coefficients.
\newblock {\em J. Differential Equations}, 188(2):555--568, 2003.

\bibitem{MR1810754}
G.~Mockenhaupt.
\newblock Salem sets and restriction properties of {F}ourier transforms.
\newblock {\em Geom. Funct. Anal.}, 10(6):1579--1587, 2000.

\bibitem{MR2353631}
B.~Pausader.
\newblock Global well-posedness for energy critical fourth-order
  {S}chr\"odinger equations in the radial case.
\newblock {\em Dyn. Partial Differ. Equ.}, 4(3):197--225, 2007.

\bibitem{Tanabe1997}
H.~Tanabe.
\newblock {\em Functional analytic methods for partial differential equations},
  volume 204 of {\em Monographs and Textbooks in Pure and Applied Mathematics}.
\newblock Marcel Dekker, Inc., New York, 1997.

\end{thebibliography}
\end{document}